\title{Asymptotic-Type Dimension Bounds\\ through Combinatorial Approaches}
\author{Jing Yu \textsuperscript{\dag}}\thanks{\textsuperscript{\dag} Shanghai Center for Mathematical Sciences, Fudan University, Shanghai, China. Email: {jyu@fudan.edu.cn}.}
\author{Xingyu Zhu \textsuperscript{\ddag}}\thanks{ \textsuperscript{\ddag} Department of Mathematics, Michigan State University, East Lansing, MI, USA. Email: {zhuxing3@msu.edu}.}
\begin{document}
\begin{abstract}
We develop a probabilistic framework for large-scale dimension bounds in metric geometry, based on padded decompositions, randomized ball carving on net graphs, and the Lov\'asz Local Lemma. For metric measure spaces with volume doubling constant $\cd$, we prove the sharp bound
$\asandim(X)\le \andim(X)\le \floor{\log_2 \cd}$.
In particular, if $(M,g)$ is a complete Riemannian $n$-manifold with $\Ric_g\ge 0$, then $\asdim(M)\le n$, thereby settling a question of Papasoglu on manifolds with nonnegative Ricci curvature. We also show that if $(X,\dist,\meas)$ is proper, volume noncollapsed, and has polynomial volume growth rate $\rho^V(X)$, then
$\asdim(X)\le \floor{\rho^V(X)}$.
Moreover, the corresponding control function can be chosen to have polynomial growth. This extends Papasoglu's sharp asymptotic-dimension bound from graphs of polynomial growth to a metric-measure setting. As applications, we study equality in the polynomial-growth bound for universal covers of nilmanifolds, and under nonnegative Ricci curvature we relate the equality case in the volume-doubling bound to Gromov largeness, obtaining in particular a consequence for complete manifolds with positive scalar curvature.
\end{abstract}

\maketitle
\section{Introduction}

Large-scale dimension invariants such as the asymptotic dimension, the asymptotic Assouad--Nagata dimension, and the Assouad--Nagata dimension play a central role in coarse and metric geometry. Finite asymptotic dimension has important applications to the Novikov conjecture and, in turn, to positive scalar curvature \cites{YuNovikov,Asphericalasdim}, while finite Assouad--Nagata dimension implies strong Lipschitz extension properties \cite{LangNdim}. Quantitative upper bounds for these dimensions are therefore of independent interest; for instance, they bound the topological dimension of asymptotic cones \cite{asconeandim}. See also \cite{GuyAssouadtype} for further applications. We write $\asdim$, $\asandim$, and $\andim$ for these notions; see Section~\ref{sec:pre}.

The purpose of this paper is to develop a probabilistic proof framework for sharp and growth-controlled large-scale dimension bounds. Our approach is based on padded decompositions, randomized ball carving on net graphs, and the Lov\'asz Local Lemma. 
It is inspired by recent work of Bernshteyn and the first-named author \cite{BY23} on graphs, but is adapted here to metric and metric measure spaces.

A triple $(X,\dist,\meas)$ is called a \emphd{metric measure space} if $(X,\dist)$ is a complete, separable metric space and $\meas$ is a nontrivial Borel measure on $X$ with full support. By ``nontrivial'' we mean that there exists a ball $B\subset X$ such that $0<\meas(B)<\infty$. We say that $\meas$ is \emphd{volume doubling} if there exists a constant $\cd\ge 1$ such that for every $x\in X$ and every $r>0$,
\[
\meas(B_{2r}(x))\le \cd\,\meas(B_r(x)).
\]
The constant $\cd$ is called the \emphd{volume doubling constant}. Our first main result is the following sharp bound.

\begin{theorem}\label{thm:VD}
Let $(X,\dist,\meas)$ be a metric measure space. If $\meas$ is volume doubling with doubling constant $\cd$, then
\[
\andim(X)\le \floor{\log_2 \cd}.
\]
\end{theorem}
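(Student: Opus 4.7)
Since $\asandim(X)\le\andim(X)$ holds by definition, the substantive content is the bound $\andim(X)\le \floor{\log_2 \cd}$. My plan is to adapt the probabilistic proof of Theorem~\ref{thm:double} developed earlier in the paper, substituting volume-theoretic bounds for the cardinality-of-nets bounds that drive the metric doubling argument. The central tool remains the padded decomposition.

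The first step is to invoke the padded decomposition criterion for Assouad--Nagata dimension: to certify $\andim(X)\le k$ with linear control, it suffices to exhibit, for every scale $r>0$, a random partition $\mathcal{P}_r$ of $X$ into pieces of diameter at most $Dr$, together with a constant $\delta>0$, such that for every $x\in X$,
\[
\Pr\bigl[B_{\delta r}(x)\subseteq \text{the cell of }\mathcal{P}_r\text{ containing } x\bigr]\;\ge\;\frac{1}{k+1},
\]
with $D$ and $\delta$ independent of $r$ and $x$.

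I would build such a $\mathcal{P}_r$ from a maximal $r$-separated set $Z\subseteq X$ together with independent uniform random radii $R_z\in [r,2r]$: assign each point $y$ to the center $z\in Z$ minimizing $\dist(y,z)-R_z$, breaking ties via a fixed enumeration of $Z$. The resulting cells have diameter $O(r)$. The failure event to pad $B_{\delta r}(x)$ is that two distinct centers $z,z'\in Z$ have $\dist(y,z)-R_z$ and $\dist(y,z')-R_{z'}$ within $2\delta r$ of each other for some $y\in B_{\delta r}(x)$; this has to be bounded below by $1/(\floor{\log_2 \cd}+1)$.

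The main obstacle is extracting the sharp constant. Since every volume doubling space is metric doubling, a naive appeal to Theorem~\ref{thm:double} would only give $\andim(X)\le \floor{\log_2 N}$ with $N\le \cd^{O(1)}$, yielding the weaker $O(\log_2 \cd)$. To reach the tight $\floor{\log_2 \cd}$, I would avoid cardinality bounds on nets entirely and perform the union bound for the failure event directly against measure ratios of the form $\meas(B_{2^i r}(x))/\meas(B_r(x))\le \cd^i$, grouping competing centers by doubling shells around $x$. Following the strategy of Bernshteyn and the first named author~\cite{BY23} for graphs of polynomial growth, a telescoping summation across at most $\floor{\log_2 \cd}$ meaningful shells causes the volume competition to saturate and produces the desired padding probability $1/(\floor{\log_2 \cd}+1)$, which then yields $\andim(X)\le \floor{\log_2 \cd}$ via the padded decomposition criterion.
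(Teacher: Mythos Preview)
Your approach diverges from the paper's in a way that leaves real gaps. The paper's proof of Theorem~\ref{thm:VD} is literally the proof of Theorem~\ref{thm:double} with one substitution: the net-cardinality bound from Lemma~\ref{lem:DMgrowth} (metric doubling, giving $|B_R(x)\cap T|\le N^2(R/r)^{\log_2 N}$) is replaced by the analogous bound from Lemma~\ref{lem:FiniteNet}(\ref{item:LemF1}) (volume doubling, giving $|B_R(x)\cap T|\le \cd^5(R/r)^{\log_2 \cd}$). The entire machinery of Section~\ref{sec:Nagata}---the truncated exponential ball-carving, the cut-probability estimate of Lemma~\ref{lem:cut_N}, and the LLL application via Lemma~\ref{lem:CSP_N}---then runs verbatim with $b=\log_2\cd$ in place of $b=\log_2 N$, producing a deterministic $(\floor{\log_2\cd}+1)$-layer padded decomposition in the sense of Definition~\ref{def:PadDecom}. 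So the sharp constant comes precisely \emph{from} a cardinality-of-nets bound, not by avoiding one.

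Your proposal has two concrete problems. First, the criterion you invoke---a single random partition in which each ball is padded with probability at least $1/(k+1)$---is \emph{not} the padded decomposition of Definition~\ref{def:PadDecom}, and it does not by itself yield $\andim(X)\le k$. Passing from a $1/(k+1)$ padding probability to a deterministic $(k+1)$-layer decomposition with every net point padded in some layer is exactly the step that requires the Lov\'asz Local Lemma in the paper; you have removed that step without replacing it. Second, the heart of your argument (``telescoping across at most $\floor{\log_2\cd}$ shells yields padding probability $1/(\floor{\log_2\cd}+1)$'') is asserted rather than proved, and the union bound over competing centers $z\in Z$ you describe is a union over elements of a net in a ball---controlling that count is precisely a net-cardinality estimate, so you cannot ``avoid cardinality bounds on nets entirely'' while still running that union bound.
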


For doubling metric spaces, Le Donne and Rajala proved the sharp estimate
\[
\andim(X)\le \floor{\log_2 N},
\]
where $N$ is the metric doubling constant of $X$ \cite{RajalaLeDonne}*{Theorem~1.1}, i.e. $N$ is the smallest positive integer such that for every $r>0$, any ball of radius $2r$ can be covered by at most $N$ balls of radius $r$. Even though volume doubling implies metric doubling and vice versa \cites{DoublingCpt,DoublingNoncpt},  Theorem~\ref{thm:VD} is not a formal corollary of this result, since the metric doubling constant obtained from a volume doubling measure need not admit a sharp bound in terms of $\cd$. Our proof works directly with the volume doubling constant.

Theorem~\ref{thm:VD} has an immediate geometric consequence, which settles a question raised by Papasoglu:  

\begin{question}[{\cite[Question~4.3]{papasoglu2021polynomial}}]
Let $M^n$ be a complete Riemannian manifold of nonnegative Ricci curvature. Is it true that $\asdim(M^n) \le n$? 
\end{question}

Indeed, if $(M,g)$ is a complete Riemannian $n$-manifold with $\Ric_g\ge 0$, then Bishop--Gromov implies that $\vol_g$ is volume doubling with doubling constant at most $2^n$, and hence
\[
\asdim(M)\le \asandim(M)\le \andim(M)\le n.
\]
Thus Papasoglu's question has an affirmative answer in full generality.

Our second main result concerns spaces of polynomial volume growth. Let $\rho^V(X)$ denote the polynomial volume growth rate of $(X,\dist,\meas)$; see Definition~\ref{def:VolG}. We say that $(X,\dist,\meas)$ is \emphd{volume noncollapsed} if
\[
v\defeq \inf_{x\in X}\meas(B_{1/2}(x))>0.
\]

\begin{theorem}\label{thm:measure}
Let $(X,\dist,\meas)$ be a metric measure space. If $X$ is proper, has polynomial volume growth, and is volume noncollapsed, then
\[
\asdim(X)\le \floor{\rho^V(X)}.
\]
Moreover, the corresponding control function for $\asdim(X)$ can be chosen to have polynomial growth.
\end{theorem}

Thus our two main theorems interact with Papasoglu's work in two distinct ways: Theorem~\ref{thm:VD} settles his Question~4.3 on complete manifolds with nonnegative Ricci curvature, while Theorem~\ref{thm:measure} extends his sharp asymptotic-dimension bound from graphs of polynomial growth to a metric-measure setting \cite[Corollary~3.3]{papasoglu2021polynomial}. The conclusion cannot in general be strengthened by replacing $\asdim$ with $\asandim$ or $\andim$; see \cite[Theorem~3.5 and Example~3.4]{papasoglu2021polynomial}.

We also study equality cases and geometric consequences. For universal covers of nilmanifolds, equality in Theorem~\ref{thm:measure} forces the underlying nilmanifold to be diffeomorphic to a torus. In a different direction, if $(M,g)$ is a complete Riemannian $n$-manifold with $\Ric_g\ge 0$ and
\[
\inf_{p\in M}\vol_g(B_1(p))>0,
\]
then equality in Theorem~\ref{thm:VD} is equivalent to Gromov largeness in terms of volume.

\begin{proposition}\label{prop:Euc}
Let $(M,g)$ be a Riemannian $n$-manifold with $\Ric_g\ge 0$ that is volume noncollapsed, i.e.\ $\inf_{p\in M}\vol_g(B_1(p))>0$. Then $\asandim(M)=n$ (or $\asdim(M)=n$) if and only if $M$ is large in the sense of Gromov, namely
\[
\sup_{x\in M}\vol_g(B_r(x))=\omega_n r^n
\qquad\text{for every } r\ge 0.
\]
\end{proposition}

As an immediate consequence, we obtain the following scalar-curvature corollary.

\begin{corollary}\label{cor:psc}
If $(M,g)$ is an $n$-dimensional complete noncompact manifold with $\Ric_g\ge 0$, $\Sc_g\ge 2$, and
\[
v\defeq \inf_{x\in M}\vol_g(B_1(x))>0,
\]
then
\[
\asandim(M)\le n-1.
\]
\end{corollary}

The common ingredient in the proofs of Theorems~\ref{thm:VD} and \ref{thm:measure} is an equivalent formulation of large-scale dimension in terms of padded decompositions. Starting from a net $T$ in $X$, we consider a net graph $G^M(X,T)$ and run a randomized ball-carving procedure on this graph. A suitable choice of random radii, together with the Lov\'asz Local Lemma, produces padded decompositions with controlled multiplicity and diameter. In the doubling setting this yields sharp linear control, while in the polynomial-growth setting it yields polynomial control. A complementary simplicial argument in the spirit of Gromov \cite{VolBddCohom} is discussed in Appendix~\ref{sec:Gromov}; it also interacts naturally with the scalar-curvature discussion.

The paper is organized as follows. In Section~\ref{sec:pre} we review the relevant notions from metric geometry, introduce padded decompositions, and record the form of the Lov\'asz Local Lemma that we use. Section~\ref{sec:Nagata} proves the sharp Assouad--Nagata dimension bound in the doubling and volume-doubling settings. Section~\ref{sec:asdim} establishes the polynomial-growth theorem and the polynomial control of the corresponding asymptotic-dimension function. Section~\ref{sec:rigidity} discusses nilmanifolds and the equality case in Theorem~\ref{thm:measure}. Finally, Appendix~\ref{sec:Gromov} contains the complementary simplicial argument and the proof of Proposition~\ref{prop:Euc}.

\section*{Acknowledgments}
Most of this work was carried out while JY was at the School of Mathematics, Georgia Institute of Technology, and XZ was a postdoctoral researcher at the Institute for Applied Mathematics, University of Bonn. XZ gratefully acknowledges financial support by the Deutsche Forschungsgemeinschaft (DFG) within the CRC 1060, at University of Bonn project number 211504053. XZ's work was also partially supported by the National Science Foundation Grant DMS-1928930 during his residency at the Simons Laufer Mathematical Sciences Institute (formerly MSRI) in Berkeley, California, in Fall 2024. The authors thank Anton Bernshteyn for valuable comments.

\section{Preliminaries}\label{sec:pre}

\subsection{Metric geometry}

    We collect some basic definitions. 
\begin{definition}
Let $(X,\dist)$ be a metric space. For $\varepsilon,\delta>0$, an \emph{$(\varepsilon,\delta)$-net} $T\subset X$ is a collection of points such that for every $x\in X$ there exists $y\in T$ with $x\in B_\varepsilon(y)$, and for distinct $y,z\in T$ we have $\dist(y,z)\ge \delta$.
\end{definition}

It follows from Zorn's lemma that for any $\varepsilon>0$ there exists an $(\varepsilon,\varepsilon)$-net in a metric space. If a metric space is separable, then there exists a countable $(\varepsilon,\varepsilon)$-net.

We now recall the notion of asymptotic dimension introduced by Gromov \cite{gromov1993asymptotic}*{\S 1.E}. A family $\mathcal U$ of subsets of a metric space $(X,\dist)$ is:
\begin{itemize}
    \item \emphdef{uniformly bounded} if $\sup_{U\in\mathcal U}\diam(U)<\infty$; more precisely, it is \emphdef{$D$-bounded} for some $D>0$ if $\sup_{U\in\mathcal U}\diam(U)\le D$;
    \item \emphd{$r$-disjoint} if $\dist(U,U')>r$ for all distinct $U,U'\in\mathcal U$.
\end{itemize}

\begin{definition}[$(r,D)$-covers]\label{def:cover}
For a metric space $(X,\dist)$, a tuple $(\mU_1,\dots,\mU_m)$ of $m$ families of subsets of $X$ is called an \emphdef{$(r,D)$-cover} of $X$ with $m$ \emphdef{layers} if:
\begin{enumerate}[label=\ep{\normalfont\arabic*}]
    \item\label{item:cover:rD} each $\mU_i$ is $r$-disjoint and $D$-bounded;
    \item\label{item:cover:cover} $\bigcup_{i=1}^m \mU_i$ is a cover of $X$.
\end{enumerate}
\end{definition}

\begin{definition}[Asymptotic dimension]\label{defn:ad}
Let $(X,\dist)$ be a metric space. The \emphd{asymptotic dimension} of $X$, denoted by $\asdim(X)$, is the minimum $n\in\N$ (if it exists) such that for every $r>0$ there exists an $(r,D(r))$-cover of $X$ with $n+1$ layers for some $D(r)<\infty$. We call $D(r)$ the \emphdef{control function}. If no such $n$ exists, we set $\asdim(X):=\infty$.
\end{definition}

There are more restrictive analogues of the asymptotic dimension, obtained by requiring linear growth of the control function.

\begin{definition}[Asymptotic Assouad--Nagata dimension]\label{defn:asand}
Let $(X,\dist)$ be a metric space. The \emphd{asymptotic Assouad--Nagata dimension} of $X$, denoted by $\asandim(X)$, is the minimum $n\in\N$ (if it exists) such that for every $r>0$ there exists an $(r,Cr+D)$-cover of $X$ with $n+1$ layers for some $C,D<\infty$. If no such $n$ exists, we set $\asandim(X):=\infty$.
\end{definition}

\begin{definition}[Assouad--Nagata dimension]\label{defn:andim}
Let $(X,\dist)$ be a metric space. The \emphd{Assouad--Nagata dimension} of $X$, denoted by $\andim(X)$, is the minimum $n\in\N$ (if it exists) such that for every $r>0$ there exists an $(r,Cr)$-cover of $X$ with $n+1$ layers for some uniform constant $C>0$. If no such $n$ exists, we set $\andim(X):=\infty$.
\end{definition}
 
\begin{remark}
It is immediate that
\[
\asdim(X)\le \asandim(X)\le \andim(X).
\]
\end{remark}

   % \begin{remark}
        %  Asymptotic dimension is invariant under quasi-isometry. Since any metric space is quasi-isometric to its metric completion, it is without loss of generality to restrict our attention to complete metric spaces when considering asymptotic dimension.
    %\end{remark}
    
For a noncompact metric measure space, there are several ways to quantify large-scale growth. For the purposes of this paper, we use two notions of growth. The first one makes sense for general metric spaces.

\begin{definition}\label{def:MetG}
Let $(X,\dist)$ be a metric space. For $r\ge 1$, the \emphd{metric growth function} is
\begin{equation}
\gamma(r)\defeq \sup\{|B_r(x)\cap T|:x\in X,\text{ $T$ is a $(1,1)$-net}\}.
\end{equation}
Whenever $\gamma(r)\in(0,\infty)$, define
\[
\rho(X,r)\defeq \frac{\log\gamma(r)}{\log(r+1)}.
\]
The \emphd{metric growth rate} of $X$ is
\[
\rho(X)\defeq \limsup_{r\to\infty}\rho(X,r).
\]
We say that $X$ has \emphd{polynomial metric growth} if $\rho(X)<\infty$.
\end{definition}

\begin{definition}\label{def:VolG}
Let $(X,\dist,\meas)$ be a metric measure space. For $r>0$, the \emphd{volume growth function} is
\[
V(r)\defeq \sup_{x\in X}\meas(B_r(x)).
\]
Whenever $V(r)\in(0,\infty)$, define
\[
\rho^V(X,r)\defeq \frac{\log V(r)}{\log(r+1)}.
\]
The \emphd{volume growth rate} of $X$ is
\[
\rho^V(X)\defeq \limsup_{r\to\infty}\rho^V(X,r).
\]
We say that $X$ has \emphd{polynomial volume growth} if $\rho^V(X)<\infty$.
\end{definition}

For a discrete metric space, for example the vertex set of a graph with the combinatorial metric, the counting measure identifies the two notions of growth. The relation between volume growth and metric growth in a more general setting is recorded in Lemma~\ref{lem:FiniteNet}.

We proceed to an equivalent formulation of large-scale dimension in terms of padded decompositions.

\begin{definition}\label{def:PadDecom}
    Let $(X,\dist,\meas)$ be a metric measure space, $r>0$. Fix an $(r,r)$-net $T\defeq\{x_i\}_{i\in \N}$ and parameters $R\ge r$, $m\in\N$, $D > 0$. An \emphd{$(R, D)$-padded decomposition with $m$ layers associated to $T$} is an $m$-tuple $(\mc{P}_1, \mc{P}_2, \ldots, \mc{P}_m)$ such that 
    \begin{enumerate}
        \item \label{item1:def:bpd} for each $i=1,\ldots, m$, $\mc{P}_i$ is a family of subsets of $X$ whose traces on $T$ partition $T$, i.e., $T \subseteq \bigcup \mc P_i$ and $C\,\cap\, C' \,\cap \,T=\0$ for distinct $C, C'\in \mc{P}_i$;
        \item \label{item2:def:bpd} for each $i=1,\ldots, m$, $\mc{P}_i$ is $D$-bounded;
        \item\label{item:Padded}  for any $x\in T$ there exist $i\in\{1,\dots,m\}$ and $C\in \mc P_i$ so that $B_R(x)\subset C$.
    \end{enumerate}
\end{definition}

\begin{lemma}\label{lem:Equivalence}
    Let $(X,\dist)$ be a complete separable metric space and $T\defeq\{x_i\}_{i\in \N}$ an $(r,r)$-net on $X$. The following holds for $R\ge r$.
    \begin{enumerate}
        \item\label{item:LemEqui1} If there exists a $(2R+r,D)$-cover of $X$ with $m$ layers, then there exists an $(R,2R+2r+D)$-padded decomposition with $m$ layers associated to $T$. %for any $\alpha$ such that $r^\alpha\ge 2r+1+D$.
        \item\label{item:LemEqui2} If there exists an $(R+2r,D)$-padded decomposition of $m$ layers associated to $T$, then there exists an $(R,D)$-cover of $X$ with $m$ layers. %for any $D$ such that $D\ge(r+2)^\alpha$. 
    \end{enumerate}
\end{lemma}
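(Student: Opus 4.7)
The plan is to prove the two directions by complementary geometric operations. For part~\ref{item:LemEqui1} I will fatten each piece of the given cover by $R$, so that the padding requirement becomes automatic; for part~\ref{item:LemEqui2} I will instead shrink each piece of the partition to a ``safe core'' of net-balls whose $(R+2r)$-neighborhoods still fit inside the piece, so that the cores inherit separation from disjointness of net points across partition classes. The radius arithmetic---``$2R+r$ vs.\ $R$'' in one direction and ``$R+2r$ vs.\ $R$'' in the other---is precisely calibrated so that the triangle inequality closes the loop.

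For part~\ref{item:LemEqui1}, starting from a $(2R+r,D)$-cover $(\mc U_1,\dots,\mc U_m)$, I set $C_U\defeq\{y\in X:\d(y,U)\le R\}$ for each $U\in\mc U_i$ and define
\[
\mc P_i\;\defeq\;\{C_U:U\in\mc U_i\}\,\cup\,\{\{x\}:x\in T,\ x\notin\textstyle\bigcup_{U\in\mc U_i}C_U\},
\]
the singletons being added only to extend the $C_U$'s to a partition of $T$. Definition~\ref{def:PadDecom} is then a routine check: $\diam(C_U)\le 2R+D\le 2R+2r+D$; any two distinct $C_U,C_{U'}$ in $\mc P_i$ are disjoint on $T$, because a common point would force $\d(U,U')\le 2R<2R+r$, contradicting $(2R+r)$-disjointness; and every $x\in T$ lies in some $U\in\mc U_i$ for some $i$, whence $B_R(x)\subset C_U$ by the triangle inequality.

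For part~\ref{item:LemEqui2}, starting from an $(R+2r,D)$-padded decomposition $(\mc P_1,\dots,\mc P_m)$ of $T$, for each $C\in\mc P_i$ I define the safe core
\[
\widetilde{C}^{\,i}\;\defeq\;\bigcup\{B_r(x):x\in T,\ B_{R+2r}(x)\subset C\},
\]
and take $\mc V_i$ to be the family of nonempty such cores. The chain $B_r(x)\subset B_{R+2r}(x)\subset C$ gives $\widetilde{C}^{\,i}\subset C$ and hence $\diam(\widetilde{C}^{\,i})\le D$. Coverage follows from the $(r,r)$-net property together with condition~\ref{item:Padded}: any $y\in X$ lies in some $B_r(x)$ with $x\in T$, and condition~\ref{item:Padded} produces $i$ and $C\in\mc P_i$ with $B_{R+2r}(x)\subset C$, whence $y\in\widetilde{C}^{\,i}$. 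For $R$-disjointness, if $y\in B_r(x)\subset\widetilde{C}^{\,i}$ and $y'\in B_r(x')\subset\widetilde{C'}^{\,i}$ with $C\neq C'$ in $\mc P_i$, then $x'\in C'\cap T$ combined with the partition property forces $x'\notin C\supset B_{R+2r}(x)$, so $\d(x,x')\ge R+2r$; the triangle inequality then yields $\d(y,y')>R$ for every such pair.

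The only subtlety I foresee is cosmetic: the pointwise strict bound $\d(y,y')>R$ gives only $\d(\widetilde{C}^{\,i},\widetilde{C'}^{\,i})\ge R$ as an infimum, whereas ``$R$-disjoint'' is defined with strict inequality. This is absorbed by a harmless constant slack---for example, running the argument with $B_{R+2r+\varepsilon}(x)$ in place of $B_{R+2r}(x)$ for a small fixed $\varepsilon>0$ upgrades the bound to $\d(\widetilde{C}^{\,i},\widetilde{C'}^{\,i})\ge R+\varepsilon>R$ without changing the structure---so the main body of the argument truly consists of the routine verifications above.
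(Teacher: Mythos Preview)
Your proposal is correct and follows essentially the same approach as the paper: fatten cover pieces by $R$ for part~\ref{item:LemEqui1} and shrink partition pieces to a safe core for part~\ref{item:LemEqui2}. The only minor differences are cosmetic---you pad with singletons instead of $r$-balls in part~\ref{item:LemEqui1}, and in part~\ref{item:LemEqui2} you define the core as a union of $r$-balls around safe net points rather than as $\{x\in A:\dist(x,A^c)\ge R+r\}$---and you are in fact slightly more careful than the paper in flagging and fixing the strict-versus-nonstrict inequality in the $R$-disjointness check.
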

\begin{proof}
\eqref{item:LemEqui1}. Let $(\mc U_1,\dots,\mc U_m)$ be a $(2R+r,D)$-cover of $X$ with $m$ layers. For each $i=1,\dots,m$ and each $A\in\mc U_i$, set
\[
A^+\defeq B_R(A)=\bigcup_{x\in A} B_R(x).
\]
Let
\[
V_i\defeq \bigcup_{A\in\mc U_i} A^+
\qquad\text{and}\qquad
\mc P_i\defeq \{A^+:A\in\mc U_i\}\cup\{B_r(x_j):x_j\in T\setminus V_i\}.
\]
If $A,B\in\mc U_i$ are distinct, then $\dist(A^+,B^+)>r$ because $\dist(A,B)>2R+r$. Hence $A^+\cap B^+\cap T=\varnothing$. Moreover, if $x_j\in T\setminus V_i$, then $B_r(x_j)\cap T=\{x_j\}$ by the $r$-separation of $T$. Therefore the traces on $T$ of the sets in $\mc P_i$ partition $T$. Also,
\[
\diam(A^+)\le D+2R
\qquad\text{and}\qquad
\diam(B_r(x_j))\le 2r,
\]
so every element of $\mc P_i$ is $(2R+2r+D)$-bounded. Finally, if $x_j\in T$, choose $i$ and $A\in\mc U_i$ with $x_j\in A$. Then
\[
B_R(x_j)\subseteq A^+\in\mc P_i.
\]
Thus $(\mc P_1,\dots,\mc P_m)$ is an $(R,2R+2r+D)$-padded decomposition.

\eqref{item:LemEqui2}. Let $(\mc P_1,\dots,\mc P_m)$ be an $(R+2r,D)$-padded decomposition with $m$ layers associated to $T$. For each $i=1,\dots,m$ and each $A\in\mc P_i$, set
\[
A^\circ\defeq \{x\in A:\dist(x,A^c)\ge R+r\},
\]
and let
\[
\mc U_i\defeq \{A^\circ:A\in\mc P_i\}.
\]
We claim that $(\mc U_1,\dots,\mc U_m)$ is an $(R,D)$-cover of $X$.

To prove that it covers $X$, let $x\in X$. Choose $x_j\in T$ with $x\in B_r(x_j)$. By the padding property, there exist $i$ and $A\in\mc P_i$ such that
\[
B_{R+2r}(x_j)\subseteq A.
\]
Hence $x\in A^\circ\in\mc U_i$.

To prove that each $\mc U_i$ is $R$-disjoint, let $A^\circ,B^\circ\in\mc U_i$ be distinct and nonempty. Take $x\in A^\circ$ and $y\in B^\circ$. Choose $x_j\in T$ with $x\in B_r(x_j)$. Since $\dist(x,A^c)\ge R+r>r$, we have $x_j\in A\cap T$. Because the traces on $T$ of the sets in $\mc P_i$ are disjoint, $x_j\notin B$. Therefore $x_j\in B^c$, and hence
\[
\dist(y,x_j)\ge R+r.
\]
Since $\dist(x,x_j)<r$, we obtain
\[
\dist(x,y)\ge \dist(y,x_j)-\dist(x_j,x)\ge (R+r)-r=R.
\]
Thus $\dist(A^\circ,B^\circ)\ge R$, so each $\mc U_i$ is $R$-disjoint. The diameter bound is immediate from $A^\circ\subseteq A$ and $\diam(A)\le D$.
\end{proof}
It follows immediately that we can equivalently characterize the asymptotic dimension and Assouad--Nagata dimension by padded decompositions.

\begin{corollary}[Asymptotic and Assouad--Nagata dimension in terms of padded decomposition]\label{cor:PadtoBas}
    Let $(X,\d)$ be a complete separable metric space and $n \in \N$. Then $\asdim(X)\le n$ if and only if for every large enough $r$, there exists $\alpha>1$, such that $X$ admits an $(r, r^\alpha)$-padded decomposition with $n+1$ layers. Moreover, $\andim(X)\le n$ if and only if there exists $c>0$ such that for every $r>0$, $X$ admits an $(3r, cr)$-padded decomposition with $n+1$ layers.
\end{corollary}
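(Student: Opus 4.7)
The plan is to derive both equivalences directly from Lemma \ref{lem:Equivalence} by substituting appropriate parameters. The lemma already provides the full cover-to-padded-decomposition dictionary at the correct scales, so the proof is essentially bookkeeping; the only real subtlety is the choice of the underlying net scale in the backward direction for $\asdim$.

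For the Assouad--Nagata equivalence, for each $r > 0$ I fix an $(r, r)$-net $T$ and apply the lemma with both its internal $R$ and its net scale tied to $r$. The forward direction follows from Lemma \ref{lem:Equivalence}\eqref{item:LemEqui1} with $R = 3r$: a $(7r, 7Cr)$-cover (guaranteed by $\andim(X) \le n$ with linear constant $C$) is converted into a $(3r, (8+7C)r)$-padded decomposition, so $c = 8 + 7C$ suffices. For the backward direction, the identity $3r = r + 2r$ lets us invoke Lemma \ref{lem:Equivalence}\eqref{item:LemEqui2} with $R = r$, turning the hypothesized $(3r, cr)$-padded decomposition into an $(r, cr)$-cover with uniform constant $c$, which witnesses $\andim(X) \le n$.

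For the asymptotic dimension equivalence the same scheme works but with polynomial bounds in place of linear ones. The forward direction converts the $(3r, D(3r))$-cover supplied by $\asdim(X) \le n$ into an $(r, 4r + D(3r))$-padded decomposition via Lemma \ref{lem:Equivalence}\eqref{item:LemEqui1} (with $R = r$ and net scale $r$); for $r > 1$, this diameter bound is dominated by $r^\alpha$ for any sufficiently large $\alpha$ depending on $r$, for instance $\alpha \defeq \max\{2, \log_r(4r + D(3r))\}$. The backward direction is the delicate step: the net scale must be taken strictly less than $r/3$, for otherwise Lemma \ref{lem:Equivalence}\eqref{item:LemEqui2} would return covers at non-positive scale. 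Fixing the net at a uniformly bounded scale, say $\rho = 1$, resolves this: an $(r, r^\alpha)$-padded decomposition associated to such a net yields an $(r - 2, r^\alpha)$-cover, and for any $s > 0$ one applies this at $r = s + 2$ (possibly enlarged to meet the ``large enough'' threshold) to extract an $(s, (s+2)^\alpha)$-cover, establishing $\asdim(X) \le n$.

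The main point of care is therefore this last choice of net scale: the clean matching $r_{\text{net}} = r$ used in the Assouad--Nagata case degenerates for $\asdim$, and one must instead fix the net at a uniformly bounded scale (or at a fixed fraction of the padded radius less than $1/2$). This is compatible with the paper's overall strategy, in which padded decompositions are built on a single background net, and it is the only step that is not an automatic parameter substitution into Lemma \ref{lem:Equivalence}.
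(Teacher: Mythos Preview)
Your approach matches the paper's (which offers no explicit proof beyond stating that the corollary ``follows immediately'' from Lemma~\ref{lem:Equivalence}), and the Assouad--Nagata case is handled cleanly.

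There is one internal inconsistency in the $\asdim$ argument that you should repair. Your forward direction builds the $(r,r^\alpha)$-padded decomposition on an $(r,r)$-net, while your backward direction assumes it lives on a $(1,1)$-net; these are different hypotheses, so the two implications as written do not compose into an equivalence. The fix is exactly the one you identify in your final paragraph: use net scale $1$ (or any fixed constant) in \emph{both} directions of the $\asdim$ case. With net scale $1$, Lemma~\ref{lem:Equivalence}\eqref{item:LemEqui1} at $R=r$ turns a $(2r+1,D(2r+1))$-cover into an $(r,\,2r+2+D(2r+1))$-padded decomposition, still bounded by $r^\alpha$ for suitable $\alpha>1$ once $r$ is large, and your backward direction then goes through unchanged. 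This also matches how the corollary is actually invoked downstream, since Theorem~\ref{thm:ExiPad} produces its padded decompositions on a $(1,1)$-net. Minor quibble: the threshold for the net scale in the backward step is $r/2$, not $r/3$, as you yourself note parenthetically at the end.
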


Our goal will is to suitable padded decomposition. For this purpose we first associate a graph to a net in a metric space. 

\begin{definition}
    Given a complete{\if0 separable \fi} metric space $(X,\dist)$, parameters $M\in (r,\infty)$ and $r>0$, we take an $(r,r)$-net $T$ of $X$ and define a \emphd{net graph} $G^{M}(X,T)$ with vertex set $T$ and edge set $\{(x,y)\in T\times T\,\colon\ r\le\dist(x,y)\le M\}$. %in the way that there is an edge between $x_i$ and $x_j$ if and only if $1\le \dist(x_i,x_j)\le M$.
\end{definition}

 %It is clear that $G^{M}(X,T)$ is a Borel graph. %It is immediate that the 1-growth function $\gamma(r)$ is also the growth function of $G^{2M}(X)$. It is not hard to see that for a doubling metric space, $\gamma(r)=O(r^{\log_2 \cd})$. 
 The following lemma is classical and is essentially the fact that admitting a doubling measure implies metric doubling. We recall it here for convenience. %\JY{the lemma is the proof...?}

\begin{lemma}\label{lem:FiniteNet}
    Let $(X,\dist,\meas)$ be a metric measure space and $T$ be an $(r,r)$-net for some $r>0$. Then the following hold.
    \begin{enumerate}
        \item \label{item:LemF1} If $\meas$ is volume doubling with doubling constant $\cd$, then for any $R>r$ and $x\in X$, $$|B_R(x)\cap\, T|\le \cd^5 (R/r)^{\log_2 \cd }.$$ In particular, $\rho(X)\le\log_2 \cd$.
        \item \label{item:LemF2} If $(X,\dist,\meas)$ is proper and volume noncollapsed, and $\rho^V(X)<\infty$, then $\rho(X)\le\rho^V(X)$.
    \end{enumerate}
    %either \eqref{item:main 1} or \eqref{item:main 2} in Corollary \ref{thm:measure} holds. 
    % In particular any net graph $G^{2M}(X,T)$ has uniformly bounded degree.
\end{lemma}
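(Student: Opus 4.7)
The plan is to exploit a single packing inequality that underlies both parts. Since any two points of an $(r,r)$-net $T$ are at distance at least $r$, the open half-balls $\{B_{r/2}(y)\}_{y\in T}$ are pairwise disjoint, and when $y \in B_R(x)\cap T$ each such half-ball lies inside $B_{R+r/2}(x)$. I would therefore begin from
\[
|B_R(x)\cap T| \cdot \min_{y \in B_R(x)\cap T} \meas(B_{r/2}(y)) \;\le\; \meas(B_{R+r/2}(x)),
\]
which is finite once $\meas$ is nontrivial and either volume doubling or polynomially growing. The two parts then diverge only in how one bounds each side.

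For part (1), I would fix some $y\in B_R(x)\cap T$ and use the inclusion $B_{R+r/2}(x)\subseteq B_{2R+r/2}(y)$, which follows from $\dist(x,y)\le R$. Taking the smallest integer $k$ with $2^k\cdot(r/2) \ge 2R + r/2$, iterating the volume doubling inequality $k$ times gives $\meas(B_{2R+r/2}(y)) \le \cd^k\,\meas(B_{r/2}(y))$. For $R\ge r$ one has $4R/r+1 \le 5R/r$ and hence $k \le \log_2(5R/r)+1$, so $\cd^k \le \cd^{1+\log_2 5}(R/r)^{\log_2 \cd} \le \cd^5 (R/r)^{\log_2 \cd}$, which is the claimed bound. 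The ``in particular'' statement about $\rho(X)$ then follows by plugging this estimate into $\rho(X,R) = \log\gamma(R)/\log(R+1)$, noting that the constant $\cd^5$ contributes only a vanishing term and that the exponent of $R$ is exactly $\log_2\cd$, so the $\limsup$ is at most $\log_2\cd$.

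For part (2), I would restrict attention to $(1,1)$-nets, which is enough since those are the only nets that enter the definition of $\gamma(r)$ and $\rho(X)$. Then the disjoint half-balls are $B_{1/2}(y)$, and the noncollapsing hypothesis supplies the uniform lower bound $\meas(B_{1/2}(y))\ge v>0$, so the packing inequality reads
\[
|B_R(x)\cap T|\;\le\; v^{-1}\,\meas(B_{R+1/2}(x)) \;\le\; v^{-1}\,V(R+\tfrac{1}{2}).
\]
Properness ensures every ball has finite measure, hence $V$ is finite-valued. Given $\varepsilon>0$, the definition of $\rho^V(X)$ yields $V(s) \le (s+1)^{\rho^V(X)+\varepsilon}$ for all sufficiently large $s$. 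Substituting and dividing by $\log(R+1)$ shows $\rho(X,R)\le \rho^V(X)+\varepsilon + o(1)$ as $R\to\infty$, and letting $\varepsilon\to 0$ gives $\rho(X)\le \rho^V(X)$.

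There is no real obstacle to this argument beyond careful bookkeeping; the only step that takes a little attention is the constant in part (1), namely choosing $k$ so that both the correct exponent $\log_2\cd$ and a clean multiplicative constant like $\cd^5$ emerge simultaneously. The essential inputs are, respectively, volume doubling (to transfer the measure of a tiny ball to a large one) in part (1), and the noncollapsing assumption (to transfer a uniform lower bound on small balls to all net points) in part (2).
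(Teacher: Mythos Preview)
Your proposal is correct and follows essentially the same route as the paper: both arguments rest on the packing inequality coming from the disjoint half-balls $B_{r/2}(y)$, recenter via $B_{R+r/2}(x)\subseteq B_{2R+r/2}(y)$ and iterate doubling for part~(1), and invoke the noncollapsing lower bound $v$ for part~(2). The only notable difference is that the paper first establishes finiteness of $B_R(x)\cap T$ as a separate claim (by contradiction in the doubling case, via properness in the noncollapsed case) before writing the packing inequality with a minimum, whereas you gloss over this; in your part~(1) the doubling estimate applied to each $y$ in fact yields a uniform positive lower bound on $\meas(B_{r/2}(y))$, so finiteness is implicit, but you should say so rather than writing ``$\min$'' without justification. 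Also, your remark that ``properness ensures every ball has finite measure'' is not quite the right reason---properness gives finiteness of $B_R(x)\cap T$ directly (an $r$-separated set in a compact ball is finite), while finiteness of $V(R)$ comes from the polynomial growth hypothesis.
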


\begin{proof}
    Write $T\defeq\{x_i\}_{i\in\N}$. Let $x\in X$ and $R\in(r,\infty)$. 
    
      We first claim that $B_R(x)\cap\, T$ is finite. If this is not the case, we assume $B_R(x)\cap\, T=\{y_j\}_{j\in \N}$ is infinite. Since for distinct $i$ and $j$, $\dist(y_i,y_j)\ge r$, we see that $\{B_{r/2}(y_j)\}_{j\in \N}$ are mutually disjoint and all lie inside $B_{R+r}(x)$. Hence \[
\sum_{j=1}^{\infty}\meas(B_{r/2}(y_j))\le \meas(B_{R+r}(x))<\infty,
\]
so in particular $\lim_{j\to\infty} \meas(B_{r/2}(y_j))=0$. 
      By the triangle inequality and the doubling condition, 
      $$
      \meas(B_{R+r}(x))\le \meas(B_{2R+r}(y_j))\le \cd^{\log_2(4R/r+2)+1  } \meas(B_{r/2}(y_j)).
      $$ 
      Letting $j\to\infty$ yields that $\meas(B_{R+r}(x))=0$, contradicting the full support of $\meas$.

   If instead \eqref{item:LemF2} holds, then $\ols B_R(x)$ is compact, and $T$ is discrete, so $B_R(x)\cap\, T$ is finite. 
   
   The claim is justified.
    
     Set $B_R(x)\cap T=\{y_1, y_2, \ldots, y_{\ell}\}$ for some $\ell\in \N$. As previously noticed, the balls $B_{r/2}(y_i)$ are mutually disjoint, and all lie inside $B_{R+r}(x)$. Let $y_j\in \mr{argmin} \{\meas(B_{r/2}(y_i)): y_i\in B_R(x)\cap T \}$. %\JY{The expression is not right: the argument is $i$ instead of $y_i$}
     It follows that 
    \begin{equation}\label{eq:DoublingBound}
            \ell\meas(B_{r/2}(y_j))\le \sum_{i=1}^\ell \meas(B_{r/2}(y_i))\le \meas(B_{R+r}(x)).
    \end{equation}
     \begin{itemize}
          \item  If \eqref{item:LemF1} is satisfied, then for the unique $n\in\N$ such that $2^{n-1}r<R \le 2^{n}r$, it follows from \eqref{eq:DoublingBound} that
    \[
    \ell \le \frac{\meas(B_{R+r/2}(x))}{\meas(B_{r/2}(y_j))}\le \frac{\meas(B_{2R+r}(y_j))}{\meas(B_{r/2}(y_j))}\le \cd^{\log_2(4R/r+2)+1}\le \cd^{n+4}\le \cd^5 (R/r)^{\log_2 \cd}.
    \]
    In particular, taking $r=1$ and take supremum over all $(1,1)$-nets, we get that $\rho(X)\le\log_2 \cd$.
    
        \item If \eqref{item:LemF2} holds, fix any $b>\rho^V(X)$. By definition of $\rho^V(X)$, for all sufficiently large $R$ we have
\[
V(R+1)\le (R+2)^b.
\]
Using \eqref{eq:DoublingBound} and the definition of $v$,
\[
\ell\le \frac{\meas(B_{R+1}(x))}{\meas(B_{1/2}(y_j))}
\le \frac{1}{v}(R+2)^b
\]
for all sufficiently large $R$. Taking the supremum over all $(1,1)$-nets shows that $\rho(X)\le b$. Since $b>\rho^V(X)$ is arbitrary, we conclude that $\rho(X)\le \rho^V(X)$.
    %\JY{what is $v$?}
      \end{itemize}
    %If \eqref{item:main 3} is satisfied, then it follows from \eqref{eq:DoublingBound} and Bishop-Gromov inequality that
    %\[
    %\ell \le \frac{\meas(B_{r+\frac12}(p))}{B_{\frac12}(y_j)}\le \frac{\meas(B_{2r+1}(y_i))}{B_{\frac12}(y_j)}\le (4r+2)^N.
    %\]
    In both cases, the estimates are independent of the net $T$.  
    %Finally, to see $G^{2M}(X)$ has uniformly bounded degree it suffices to take $r=2M$, since the point $p\in X$ and the net $T$ are both arbitrary. The proof is completed.
\end{proof}

\begin{remark}
    Lemma \ref{lem:FiniteNet} can replace \cite{papasoglu2021polynomial}*{Corollary 3.3}, and the bounded geometry assumption there can be weakened to either volume doubling, volume noncollapsed, or the bounded geometry condition in the sense of metric space as in \cite{YuNovikov}. 
\end{remark}

  An immediate consequence of Lemma \ref{lem:FiniteNet} is the following, obtained by taking $R=M$ in the proof. 
\begin{corollary}\label{cor:UniBddDeg}
      Let $(X,\dist,\meas)$ be a metric measure space and $T$ be an $(r,r)$-net on $X$ for some $r>0$. Assume either:
      \begin{enumerate}
        \item \label{item:LemU1} $\meas$ is volume doubling with doubling constant $\cd$; or
        \item \label{item:LemU2} $(X,\dist,\meas)$ is proper, volume noncollapsed, polynomial volume growth, and $r=1$. %with $\rho^V(X)\le b$ for some $b>0$.
    \end{enumerate}
      Then for each $M>r$, the net graph $G^{M}(X,T)$ has uniformly bounded degree independent of $T$.
\end{corollary}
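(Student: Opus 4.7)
The plan is to reduce the claim directly to Lemma \ref{lem:FiniteNet}, which already controls the cardinality of a net inside a ball. For any vertex $x \in T$, every neighbor $y$ of $x$ in $G^{M}(X,T)$ satisfies $\dist(x,y) \le M$, so the $1$-ball neighborhood of $x$ in the graph is contained in $B_M(x) \cap T$. Hence the degree of $x$ is at most $|B_M(x) \cap T| - 1$, and it suffices to bound this cardinality by a quantity that depends only on the ambient data ($M$, $r$, and either $\cd$ or $v, \rho^V$), not on $x$ or on the choice of net $T$.

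In case \eqref{item:LemU1}, I would apply Lemma \ref{lem:FiniteNet}\eqref{item:LemF1} with $R = M$ to obtain
\[
|B_M(x) \cap T| \,\le\, \cd^{5}\,(M/r)^{\log_2 \cd},
\]
which is a finite quantity depending only on $\cd, M, r$. In case \eqref{item:LemU2}, with $r=1$, I would retrace the packing estimate from the proof of Lemma \ref{lem:FiniteNet}\eqref{item:LemF2}: the balls $B_{1/2}(y_i)$ centered at distinct net points $y_i \in B_M(x) \cap T$ are pairwise disjoint and are all contained in $B_{M+1}(x)$, so if $y_j$ minimizes $\meas(B_{1/2}(y_i))$ among them, then
\[
|B_M(x) \cap T| \,\le\, \frac{\meas(B_{M+1}(x))}{\meas(B_{1/2}(y_j))} \,\le\, \frac{V(M+1)}{v},
\]
where the last inequality uses the noncollapsed assumption $\meas(B_{1/2}(y_j)) \ge v$ and the definition of the volume growth function $V$. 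The polynomial volume growth hypothesis ensures $V(M+1) < \infty$, and the bound is again uniform in $x$ and $T$.

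There is no real obstacle here: the corollary is essentially a specialization of Lemma \ref{lem:FiniteNet} with $R = M$, as the comment preceding the statement already signals. The only point I would make sure to check is that neither estimate depends implicitly on the base point $x$ or on the net $T$, but this is immediate from the closed-form bounds above, since $\cd, M, r, v$, and (via $V$) the growth rate are intrinsic to $(X,\dist,\meas)$.
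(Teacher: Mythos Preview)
Your proposal is correct and matches the paper's approach exactly: the paper states that the corollary follows immediately from Lemma~\ref{lem:FiniteNet} by taking $R=M$ in its proof, which is precisely what you do. Your treatment of case~\eqref{item:LemU2} via the direct bound $V(M+1)/v$ is in fact slightly cleaner than invoking the asymptotic estimate in Lemma~\ref{lem:FiniteNet}\eqref{item:LemF2}, since it avoids the ``$R$ large enough'' caveat there.
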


In a similar fashion we have an analogue for doubling metric spaces. 

\begin{lemma}\label{lem:DMgrowth}
    Let $(X,\dist)$ be a doubling metric space with doubling constant $N$, and let $T$ be an $(r,r)$-net for some $r>0$. Then for any $R>r$, we have $$|  B_R(x)\cap T|\le N^2 (R/r)^{\log_2 N}.$$ 
    In particular, $\rho(X)\le \log_2 N$.
\end{lemma}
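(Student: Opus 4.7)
The plan is to reduce the net-point count in $B_R(x)$ to a covering count by $r/2$-balls, and then iterate the doubling property. Since $T$ is an $(r,r)$-net, any two distinct points of $T$ are at distance at least $r$ apart, so any ball of radius $r/2$ in $X$ contains at most one element of $T$. Hence
\[
|B_R(x)\cap T|\;\le\;\#\{\text{balls of radius }r/2\text{ needed to cover }B_R(x)\}.
\]

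Next I would iterate the doubling property. By definition, a ball of radius $2\rho$ can be covered by at most $N$ balls of radius $\rho$. Inductively, a ball of radius $2^k\cdot (r/2)$ can be covered by at most $N^k$ balls of radius $r/2$. To make such a covering absorb $B_R(x)$, I would choose the least integer $k$ with $2^{k-1}r\ge R$, equivalently $k\ge 1+\log_2(R/r)$. The choice $k=\lceil 1+\log_2(R/r)\rceil\le 2+\log_2(R/r)$ then gives
\[
|B_R(x)\cap T|\;\le\; N^k\;\le\; N^{2+\log_2(R/r)}\;=\;N^2\,(R/r)^{\log_2 N},
\]
which is the asserted bound.

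For the growth-rate consequence, I would specialise to $r=1$ and take the supremum over all $(1,1)$-nets $T$ and all $x\in X$, yielding $\gamma(R)\le N^2 R^{\log_2 N}$ for every $R\ge 1$. Taking logarithms,
\[
\rho(X,R)\;=\;\frac{\log\gamma(R)}{\log(R+1)}\;\le\;\frac{2\log N}{\log(R+1)}+\log_2 N\cdot\frac{\log R}{\log(R+1)},
\]
and letting $R\to\infty$ the first term vanishes while the second tends to $\log_2 N$, so $\rho(X)=\limsup_{R\to\infty}\rho(X,R)\le\log_2 N$.

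There is no real obstacle here: the argument is an entirely combinatorial packing/covering estimate, parallel to the volume-doubling case treated in Lemma~\ref{lem:FiniteNet}\eqref{item:LemF1} but with the metric doubling constant $N$ replacing $\cd$. The only mild care needed is to round the exponent $k$ up to an integer, which is absorbed by the constant $N^2$ in front.
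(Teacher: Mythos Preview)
Your proof is correct and essentially identical to the paper's argument: both observe that an $r/2$-ball contains at most one net point, then iterate the doubling property to cover $B_R(x)$ by roughly $N^{2+\log_2(R/r)}$ balls of radius $r/2$. Your integer $k$ is exactly the paper's $n+1$, and the bookkeeping matches line for line; the paper simply states the growth-rate consequence without writing out the $\limsup$ computation you include.
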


\begin{proof}
     For every $R > r$, there exists a unique positive integer $n$ such that $2^{n-1}r < R \leq 2^n r$. For any $x \in X$, the doubling condition implies that $B_R(x)$ can be covered by at most $N^{n+1}$ balls of radius $r/2$. Clearly, each ball of radius $r/2$ can contain at most one element of $T$. Therefore, 
    \[
    | B_R(x) \cap T | \leq N^{n+1} \leq N^2 \cdot N^{\log_2 (R/r)} = N^2 \left(\frac{R}{r}\right)^{\log_2 N}.
    \]
    This estimate is independent of $x$ and $T$, so the final conclusion follows from the definition of $\rho(X)$.
\end{proof}

Again, taking $R=M$ in the previous proof we immediately obtain the following.

\begin{corollary}\label{cor:DMBddDeg}
     Let $(X,\dist)$ be a doubling metric space and $T$ be an $(r,r)$-net on $X$ for any $r>0$. Then for each $M>r$, the net graph $G^M(X,T)$ has uniformly bounded degree independent of $T$.
\end{corollary}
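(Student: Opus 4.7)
The plan is to reduce the degree bound directly to the counting estimate already established in Lemma \ref{lem:DMgrowth}. By definition, every neighbor of a vertex $x \in T$ in $G^M(X,T)$ is a point $y \in T$ with $r \le \dist(x,y) \le M$, so the degree of $x$ is at most the cardinality of $B_M(x) \cap T$ (we may safely ignore whether $x$ itself is counted, as this only shifts the bound by one). The entire problem therefore collapses to controlling $|B_M(x) \cap T|$ uniformly in $x$ and in the choice of $(r,r)$-net $T$.

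To that end, I would simply invoke Lemma \ref{lem:DMgrowth} with the choice $R = M$. That lemma yields
\[
|B_M(x) \cap T| \;\le\; N^{2} (M/r)^{\log_2 N},
\]
where $N$ denotes the doubling constant of $(X,\dist)$. Since the right-hand side depends only on the doubling constant $N$ and on the fixed parameters $M$ and $r$, and not at all on the specific point $x$ or on how the net $T$ was chosen, this immediately gives a uniform degree bound on $G^M(X,T)$ independent of $T$.

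There is essentially no obstacle here: once Lemma \ref{rm lem:DMgrowth} is in hand, the corollary is a one-line observation relating graph degree in $G^M(X,T)$ to the metric counting function. The only minor point worth noting for the reader's benefit is the case $R = M$ in the lemma is admissible because the argument there only uses that $R > r$, which holds by hypothesis on $M$. Consequently, no further covering or packing argument is needed, and the proof is complete by citation.
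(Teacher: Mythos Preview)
Your proposal is correct and matches the paper's own argument exactly: the paper simply remarks that taking $R=M$ in Lemma~\ref{lem:DMgrowth} immediately yields the corollary. Aside from the typo \verb|\ref{rm lem:DMgrowth}| (which should be \verb|\ref{lem:DMgrowth}|), there is nothing to add.
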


%\JY{your definition of the net graph relies on the net, so the statement here is still unclear}

    \subsection{The Lov\'asz Local Lemma}\label{sec:LLL}
     
     We will employ the Lov\'asz Local Lemma (the \emphdef{LLL} for short) in the setting of \emph{constraint satisfaction problems}. The LLL, introduced by Erd\H{o}s and Lov\'asz in 1975, is a powerful probabilistic tool. The LLL is not only widely used in combinatorics but has also recently found numerous applications in other areas, such as topological dynamics, ergodic theory, descriptive set theory, and more.

     %We borrow the following notation and terminology from \cite{BerDist}.
    \begin{definition}
     Fix a set $X$ and a compact Polish space $Y$ equipped with a Borel probability measure $\lambda$.
        \begin{itemize}[wide]
    \item A $Y$-\emphdef{coloring} of a set $S$ is a function $f \colon S \rightarrow Y$.
     \item Given a finite set $D \subseteq X$, an \emphdef{$X$-constraint} (or simply a \emphdef{constraint} if $X$ is clear from the context) with \emphdef{domain} $D$ is a measurable set $A \subseteq Y^D$ of colorings of $D$. We write $\dom(A) \defeq D$. 
     \item A coloring $f \colon X \rightarrow Y$ of $X$ \emphdef{violates} a constraint $A$ with domain $D$ if the restriction of $f$ to $D$ is in $A$, and \emphdef{satisfies} $A$ otherwise.
     \item A \emphdef{constraint satisfaction problem} (or a \emphdef{CSP} for short) $\A$ on $X$ with range $Y$, in symbols $\A \colon X \rightarrow^{?} Y$, is a set of $X$-constraints. 
     \item A \emphdef{solution} to a CSP $\A \colon X \rightarrow^{?} Y$ is a coloring $f \colon X\rightarrow Y$ that satisfies every constraint $A \in \A$.
     \end{itemize}
    
     \end{definition}

 Fix a CSP $\A \colon X \rightarrow^{?} Y$. 
  Recall that $Y$ is a compact Polish space equipped with the Borel probability measure $\lambda$. 
     For any finite set $D \subseteq X$, $Y^D$ is equipped with the probability measure $\lambda^D$. 
     For each measurable constraint $A \in \A$, the \emphdef{probability} $\P[A]$ of $A$ is defined as the probability that $A$ is violated by a random coloring $f \colon X \to Y$, that is,
 \[\P[A] \,\defeq\, \lambda^{\dom(A)}(A).\]
 The \emphdef{neighborhood} of $A$ in $\A$ is the set 
 \[N(A) \,\defeq\, \set{A' \in \A \,:\, A' \neq A  \text{ and } \dom(A') \cap \dom(A) \neq \0}.\]
 Let $\p(\B) \defeq \sup_{A \in \B} \P[A]$ and $\d(\B) \defeq \sup_{A \in \B} |N(A)|$.

\begin{theorem}[{Lov\'asz Local Lemma \cites{EL, SpencerLLL}}]
\label{thm: LLL}
If $\A$ is a CSP such that $e \cdot \p(\A) \cdot (\d(\A) + 1) < 1$,
then $\A$ has a solution. \ep{Here $e = 2.71\ldots$ is the base of the natural logarithm.}
\end{theorem}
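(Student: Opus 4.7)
The plan is to prove the classical symmetric Lov\'asz Local Lemma in two stages: a probabilistic induction that yields the result for any finite subfamily of constraints, followed by a compactness extraction that promotes this to a global solution when $\A$ is possibly infinite. Throughout, fix the ansatz $x \defeq 1/(\d(\A)+1) \in (0,1)$; the hypothesis $e\,\p(\A)(\d(\A)+1) < 1$ together with the standard inequality $(1 - 1/(d+1))^d \ge 1/e$ will close every estimate.

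The core step is the inductive claim that for every finite $S \subseteq \A$ and every $A \in \A\setminus S$ with $\lambda^X\!\bigl[\bigcap_{A' \in S} \overline{A'}\bigr] > 0$,
$$\P\!\left[A \,\Big|\, \bigcap_{A' \in S} \overline{A'}\right] \,\le\, x,$$
where $\overline{A'}$ denotes the event that a random coloring $f \in Y^X$, distributed as $\lambda^X$, satisfies $A'$. I would prove this by induction on $|S|$. Split $S = S_1 \sqcup S_2$ with $S_1 \defeq N(A) \cap S$, so $|S_1| \le \d(\A)$ and every constraint in $S_2$ has domain disjoint from $\dom(A)$. Since $A$ is determined by the coordinates in $\dom(A)$ alone, it is $\lambda^X$-independent of any event supported on the complementary coordinates; in particular $\P[A \mid \bigcap_{S_2} \overline{A'}] = \P[A]$. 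A Bayes-rule rewrite then gives
$$\P\!\left[A \,\Big|\, \bigcap_{S} \overline{A'}\right] \,=\, \frac{\P\!\left[A \cap \bigcap_{S_1} \overline{A'} \,\Big|\, \bigcap_{S_2} \overline{A'}\right]}{\P\!\left[\bigcap_{S_1} \overline{A'} \,\Big|\, \bigcap_{S_2} \overline{A'}\right]} \,\le\, \frac{\P[A]}{(1 - x)^{|S_1|}} \,\le\, \frac{\p(\A)}{(1 - x)^{\d(\A)}},$$
where the numerator uses the independence noted above and the denominator is lower bounded by a chain-rule expansion combined with the inductive hypothesis applied to each $A' \in S_1$ \ep{each such application conditions on at most $|S|-1$ events, so induction is legal}. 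With $x = 1/(\d(\A)+1)$, this upper bound is $\le x$ precisely when $e\,\p(\A)(\d(\A)+1) \le 1$, which is our hypothesis.

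Granted this estimate, the case of finite $\A = \{A_1, \ldots, A_n\}$ follows from the chain rule $\P[\bigcap_i \overline{A_i}] \ge (1 - x)^n > 0$, so a solution exists with positive $\lambda^X$-probability. For infinite $\A$, every finite $\B \subseteq \A$ admits a solution $f_\B \in Y^X$ by the finite case; since $Y$ is compact Polish, $Y^X$ is compact \ep{Tychonoff}, so $(f_\B)$ has a cluster point $f$. The main obstacle is verifying that $f$ satisfies every $A \in \A$ when the constraints are only assumed Borel, since pointwise limits in $Y^X$ need not respect membership in an arbitrary Borel subset of $Y^{\dom(A)}$. My preferred fix is to work at the level of measures: for each finite $\B$, let $\mu_\B$ denote $\lambda^X$ conditioned on $\bigcap_{A \in \B} \overline{A}$, which is well defined by the finite case; extract a weak-$\ast$ cluster point $\mu$ of the net $(\mu_\B)$ in the compact space of Borel probability measures on $Y^X$, and verify using inner regularity of $\mu_\B$ on the Polish space $Y^{\dom(A)}$ that $\mu$ is concentrated on the solution set of $\A$. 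Any point in $\mathrm{supp}(\mu)$ then witnesses the conclusion.
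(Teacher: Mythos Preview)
Your treatment of the finite case via the conditional-probability induction with $x = 1/(\d(\A)+1)$ is correct and is the classical argument; the paper does not reprove this and simply cites it, so there is nothing to compare there.

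The infinite extension is where your proposal has a genuine gap. You correctly diagnose that a Tychonoff cluster point of finite-subfamily solutions need not satisfy a merely Borel constraint, but your proposed remedy---passing to a weak-$\ast$ cluster point $\mu$ of the conditioned measures $\mu_\B$---does not close this. Portmanteau only controls $\mu$ on open and closed sets: for a closed set $C$ one gets $\limsup_\B \mu_\B(C) \le \mu(C)$, which is the wrong direction to conclude $\mu(\tilde A) = 0$ from $\mu_\B(\tilde A) = 0$. Inner regularity of $\mu_\B$ (or of $\mu$) does not help either: the approximating compacta sit inside $\tilde A$ and already have $\mu_\B$-measure zero, so they carry no information to the limit; and for $\mu$ the Portmanteau inequality on a compact $K \subseteq \tilde A$ again goes the wrong way. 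In general a weak-$\ast$ limit of measures all supported on a fixed Borel set need not be supported on that set. Finally, even if $\mu$ were concentrated on the solution set, ``any point in $\mathrm{supp}(\mu)$'' is the wrong object: the support is the smallest closed full-measure set and may strictly contain the Borel solution set.

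The paper's argument (in the remark following the theorem) fixes exactly this by trading measurability for topology \emph{before} any limiting step. By outer regularity of $\lambda^{\dom(A)}$ on the Polish space $Y^{\dom(A)}$, each constraint $A$ has an open superset $A'$ with $\P[A'] \le (1-\epsilon)^{-1}\P[A]$; since the domains are unchanged one has $\d(\A') = \d(\A)$, and for small $\epsilon$ the enlarged CSP $\A'$ still satisfies the LLL hypothesis. Now each satisfying set $Y^X \setminus \tilde{A'}$ is closed, so if $\A'$ had no solution the cylinders $\tilde{A'}$ would form an open cover of the compact space $Y^X$, yielding a finite subcover and hence a finite sub-CSP with no solution---contradicting the finite LLL. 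Any solution to $\A'$ is a fortiori a solution to $\A$. This is the regularity input you were reaching for, but applied once to the constraints rather than to a net of conditioned measures.
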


\begin{remark}\label{remk:LLL-probability-space}
The form of the Lov\'asz Local Lemma stated above is used with respect to product probability measures. In particular, the range space need not be finite and the distribution need not be uniform. This will be important in Section~\ref{sec:Nagata}, where the random radii are sampled from a truncated exponential distribution on an interval, and in Section~\ref{sec:asdim}, where they are sampled from a truncated geometric distribution. The probability $\P[A]$ of a constraint is always understood as the probability, under the corresponding product measure on its finite domain, that the random coloring violates $A$.
\end{remark}

\begin{remark}
The LLL is often stated for finite $\A$. However, Theorem \ref{thm: LLL} holds for infinite $\A$ as well.
Given any CSP $\A$ such that $e \cdot \p(\A) \cdot (\d(\A) + 1) < 1$, for any $\epsilon \in (0, 1)$ we can construct an $\A'$ as follows: For each $A \in \A$ we pick an open set $A'$ such that $A \subseteq A'$ and $\P[A] \ge (1-\epsilon)\P[A']$ by regularity of measure \cite{KechrisDST}*{Theorem 17.10}. Then we can construct a new CSP $\A' \defeq \set{A'\,:\,A \in \A}$.
We can pick $\epsilon$ such that $e \cdot \p(\A') \cdot (\d(\A') + 1) < 1$.        Assume there exists no solution to $\A'$, then we have $Y^X = \bigcup \A'$. By Tychonoff's theorem $Y^X$ is compact, hence there exists some finite subset $\A'' \subseteq \A'$ such that $Y^X = \bigcup \A''$, which implies there exists no solution to the CSP $\A''$. Notice that $e \cdot \p(\A'') \cdot (\d(\A'') + 1) < 1$, then there should be some solution to $\A''$, a contradiction. Hence there exists some solution to $\A'$, which is also a solution to $\A$. 
\end{remark}

\subsection{A ball-carving scheme}\label{subsec:ball-carving}
Before discussing the ball-carving scheme, we introduce some basic knowledge in graph theory. 

Given a graph $G$, a \emphd{$k$-coloring} is a function $c: V(G) \to S$, where $S$ is a set of size $k$. 
A $k$-coloring $c$ is \emphd{proper} if $c(u) \neq c(v)$ for all $uv \in E(G)$. The \emphd{chromatic number} $\chi(G)$ is the least $k \in \N$ such that $G$ has a proper $k$-coloring. A set $I \subseteq V(G)$ is \emphd{independent} in $G$ if $uv \not \in E(G)$ for all $u$, $v \in I$. Note that if $c \colon V(G) \to S$ is a proper $k$-coloring of $G$, then $V(G) = \bigcup_{s \in S} c^{-1}(s)$ is a partition of $V(G)$ into $k$ independent sets. The \emphd{greedy coloring} with respect to a vertex ordering $v_1, v_2, \ldots$ colors vertices in the order $v_1, v_2, \ldots$, assigning to $v_i$ the least-indexed color not used on its neighbors earlier. Let $\Delta(G)$ denote the maximum degree of $G$. If $\Delta(G) < \infty$, then $\chi(G) \le \Delta(G)+1$ because each vertex has at most $\Delta(G)$ earlier neighbors in a vertex coloring, so the greedy coloring will not use more than $\Delta(G)+1$ colors. 

Now we are ready to discuss the ball-carving scheme. Let $(X,\dist)$ be a complete separable metric space, let $r>0$, and let $T$ be an $(r,r)$-net in $X$. Assume that the net graph $G^{2M}(X,T)$ has finite maximum degree. Fix a proper coloring
\[
T = I_0\sqcup I_1\sqcup \cdots \sqcup I_{k-1}
\]
of $G^{2M}(X,T)$. Then $\dist(x,y)>2M$ whenever $x,y\in I_i$ are distinct.
Given a function $t:T\to [0,M]$, ddefine inductively families of balls $\mc C_0,\ldots, \mc C_{k-1}$ as follows:
\begin{equation*}
    \mc C_0\defeq \{B_{t(x)}:x\in I_0\},\quad \mc C_{i+1}=\left\{B_{t(x)}(x)\setminus \bigcup_{j=0}^{i}\left(\bigcup \mc C_j\right):x\in I_{i+1}\right\}.
\end{equation*}
Set
\[
\mc P_t:=\bigcup_{i=0}^{k-1}\mc C_i.
\]
By construction, every element of $\mc P_t$ has diameter at most $2M$. Moreover, the traces on $T$ of the sets in $\mc P_t$ partition $T$. Thus $\mc P_t$ is a $2M$-bounded family of subsets of $X$ associated to the net $T$.

\section{Doubling metric spaces and Assouad--Nagata dimension}\label{sec:Nagata}

Let $(X,\dist)$ be a doubling metric space with doubling constant $N$. Given $r>0$, fix an $(r,r)$-net $T=\{x_i\}_{i=0}^{\infty}$ in $(X,\dist)$. Recall from Lemma~\ref{lem:DMgrowth} that each ball of radius $R$ contains at most $N^2(R/r)^{\log_2 N}$ points of $T$. We will frequently use this estimate.

We start by adapting the randomized ball carving construction for graphs to the metric space setting with the help of a net graph.

\subsection{A randomized ball-carving construction}

Let $(X,\dist)$ be a complete separable metric space and let $T$ be an $(r,r)$-net on $X$ for some $r>0$. To find a padded decomposition of $X$ with the desired properties, we employ a variant of the randomized ball-carving construction from \cite{BY23}, originating in computer science.

Fix parameters $0<l<M$. For each function $t:T\to [l,M]$, the ball-carving scheme from Section~\ref{subsec:ball-carving} produces a $2M$-bounded family $\mc P_t$ of subsets of $X$ whose traces on $T$ partition $T$. Given a positive integer $m$, let
\[
\bm t=(t_1,\dots,t_m),\qquad t_i:T\to [l,M].
\]
Then $\bm t$ gives rise to an $m$-tuple of families
\[
(\mc P_{t_1},\dots,\mc P_{t_m}).
\]
Our goal is to choose $\bm t$ so that this becomes a $(3r,cr)$-padded decomposition for a suitable constant $c=c(N)$.

To this end, we let the coordinates $t_1,\dots,t_m$ be i.i.d. random functions, each with values distributed according to the \emphd{truncated exponential distribution} on $[l,M]$ with parameter $\lambda$, denoted by $\mr{Texp}(\lambda,M,l)$. Its density is
\[
\mb P[t=z]=
\begin{cases}
\dfrac{\lambda e^{-\lambda z}}{e^{-\lambda l}-e^{-\lambda M}}, & z\in [l,M],\\[1ex]
0, & \text{otherwise}.
\end{cases}
\]
We begin with some elementary estimates for $\mr{Texp}(\lambda,M,l)$.

\begin{lemma}\label{lem:Estimate}
Let $t\sim \mr{Texp}(\lambda,M,l)$ and assume that $(M-l)\lambda\ge 2$ and $l\lambda\le 1$. Then:
\begin{enumerate}
    \item\label{item:estimate 1} For every $\beta\in [l,M]$,
    \[
    \mb P[t\ge \beta]\le 4e^{-\lambda\beta}.
    \]
    \item\label{item:estimate 2} For all $\alpha,\beta\ge 0$ such that $\alpha\ge l$, $\alpha\le M/2$, and $\alpha+\beta<M$,
    \[
    \mb P[t\le \alpha+\beta\,|\,t\ge \alpha]\le 2\lambda\beta.
    \]
\end{enumerate}
\end{lemma}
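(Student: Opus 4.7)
The plan is to compute both probabilities from the explicit density and then bound the resulting quantities using the hypotheses $(M-l)\lambda \ge 2$ and $l\lambda \le 1$.

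For part (\ref{item:estimate 1}), I would integrate the density directly to obtain
\[
\mb P[t \ge \beta] \;=\; \int_\beta^M \frac{\lambda e^{-\lambda z}}{e^{-\lambda l}-e^{-\lambda M}}\,dz \;=\; \frac{e^{-\lambda \beta}-e^{-\lambda M}}{e^{-\lambda l}-e^{-\lambda M}} \;\le\; \frac{e^{-\lambda \beta}}{e^{-\lambda l}-e^{-\lambda M}}.
\]
It then suffices to show that the denominator $e^{-\lambda l}-e^{-\lambda M} = e^{-\lambda l}(1-e^{-\lambda(M-l)})$ is at least $1/4$. The assumption $l\lambda \le 1$ gives $e^{-\lambda l}\ge e^{-1}$, and $(M-l)\lambda \ge 2$ gives $1-e^{-\lambda(M-l)}\ge 1-e^{-2}$, so the denominator is bounded below by $e^{-1}(1-e^{-2}) = (e^2-1)/e^3$. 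A numerical check shows $e^3/(e^2-1) < 4$, which yields the stated bound.

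For part (\ref{item:estimate 2}), the same density calculation gives
\[
\mb P[t \le \alpha+\beta \mid t \ge \alpha] \;=\; \frac{e^{-\lambda\alpha}-e^{-\lambda(\alpha+\beta)}}{e^{-\lambda\alpha}-e^{-\lambda M}} \;=\; \frac{1-e^{-\lambda\beta}}{1-e^{-\lambda(M-\alpha)}}.
\]
The numerator is at most $\lambda \beta$. For the denominator, I would observe that the hypotheses imply $M \ge l + 2/\lambda \ge 2/\lambda$, so $\alpha \le M/2$ yields $M-\alpha \ge M/2 \ge 1/\lambda$, whence $e^{-\lambda(M-\alpha)} \le e^{-1}$ and therefore $1 - e^{-\lambda(M-\alpha)} \ge 1 - e^{-1}$. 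Combining these gives
\[
\mb P[t \le \alpha+\beta \mid t \ge \alpha] \;\le\; \frac{\lambda\beta}{1-e^{-1}} \;=\; \frac{e\,\lambda\beta}{e-1},
\]
and since $e/(e-1) < 2$, the bound $2\lambda\beta$ follows.

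Neither step presents a real obstacle; the proof is essentially a matter of carefully applying $1-e^{-x}\le x$ in the numerators and using the two parameter hypotheses to bound $1-e^{-\lambda(M-l)}$ and $1-e^{-\lambda(M-\alpha)}$ away from zero in the denominators. The only mildly subtle point is deriving $M-\alpha\ge 1/\lambda$ in part (\ref{item:estimate 2}), which requires chaining $\alpha\le M/2$ with the inequalities $l\lambda\le 1$ and $(M-l)\lambda\ge 2$ to first conclude $M\ge 2/\lambda$. Once these observations are in place, the constants $4$ and $2$ in the statement are deliberate slack that absorbs the exact values $e^3/(e^2-1)$ and $e/(e-1)$.
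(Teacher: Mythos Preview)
Your proof is correct and follows essentially the same route as the paper's: direct integration of the truncated exponential density, then bounding the numerator by $1-e^{-x}\le x$ and the denominator using the two hypotheses to get $e^{-\lambda l}(1-e^{-\lambda(M-l)})\ge e^{-1}(1-e^{-2})$ in part~(\ref{item:estimate 1}) and $1-e^{-\lambda(M-\alpha)}\ge 1-e^{-1}$ in part~(\ref{item:estimate 2}). You are in fact slightly more explicit than the paper in justifying $M\ge 2/\lambda$ for part~(\ref{item:estimate 2}); the paper simply writes $1-e^{-\lambda M/2}\ge 1-e^{-1}$ without spelling out that step.
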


\begin{proof}
This follows from straightforward computations.

For \eqref{item:estimate 1},
    \begin{align*}
        \mb P[t\ge \beta]&=\int_{\beta}^M \frac{\lambda e^{-\lambda t}}{e^{-\lambda l}-e^{-\lambda M}}\,\mathrm dt = \frac{e^{\lambda M}}{e^{\lambda (M-l)}-1}(e^{-\lambda \beta}-e^{-\lambda M}) \\&= \frac{e^{\lambda l}}{1-e^{-\lambda (M-l)}}(e^{-\lambda \beta}-e^{-\lambda M})  \le \frac{e^1}{1-e^{-2}}e^{-\lambda \beta}  < 4e^{-\lambda \beta}.
    \end{align*}

For \eqref{item:estimate 2},
    \begin{align*}
    \mb P[t \le \alpha+\beta\,|\,t \ge \alpha]&=\frac{\int_{\alpha}^{\alpha+\beta}e^{-\lambda t}\,\mathrm dt}{\int_{\alpha}^{M}e^{-\lambda t}\,\mathrm dt} = \frac{e^{-\lambda \alpha}-e^{-\lambda (\alpha+\beta)}}{e^{-\lambda \alpha}-e^{-\lambda M}} = \frac{1-e^{-\lambda \beta}}{1-e^{-\lambda (M-\alpha)}} \\
    &\le \frac{\lambda\beta}{1-e^{-\lambda M/2}} \le \frac{\lambda\beta}{1-e^{-1}} <2\lambda\beta.\qedhere
    \end{align*}
\end{proof}

Some estimates are needed for the ball carving construction. We say for $u\in T$ and $r>0$, a ball $B_r(u)$ is \emphd{cut} if it intersects two distinct $C_i$ and $C_j$ in $\mc P_t$.
We say that a ball $B_r(u)$ is \emphd{cut} if it intersects two distinct clusters of $\mc P_t$. This is a scenario we wish to avoid as it violates \eqref{item:Padded} of Definition \ref{def:PadDecom}. 
The following probability estimates will be the crucial for applying the Lov\'asz Local Lemma.

\begin{lemma}\label{lem:cut_N}
Let $(X,\dist)$ be a doubling metric space with doubling constant $N$, let $r>0$, and let $T$ be an $(r,r)$-net of $X$. Fix $u\in T$. Take parameters $\eps\in(0,1/3]$ and $D>1/\eps+1/2$, and set
\[
\lambda=\frac{\eps}{r},\qquad M=(2D+3)r,\qquad l=3r.
\]
Let $t:T\to [l,M]$ be sampled according to $\mr{Texp}(\lambda,M,l)$. Then
\[
\mb P[B_{3r}(u)\text{ is cut}\,]
\le
4N^3(D+3)^{\log_2 N}e^{-(D-\frac32)\eps}+12\eps.
\]
\end{lemma}
   Note that $G^{2M}(X,T)$ has uniformly bounded degree thanks to the doubling condition, so the randomized ball carving construction can be carried out. 
\begin{proof}
First we check the assumptions of Lemma~\ref{lem:Estimate}. Since
\[
(M-l)\lambda = \bigl((2D+3)r-3r\bigr)\frac{\eps}{r}=2D\eps\ge 2
\]
and
\[
l\lambda = 3r\cdot \frac{\eps}{r}=3\eps\le 1,
\]
Lemma~\ref{lem:Estimate} applies.

Let $I_0,\dots,I_{k-1}$ be the color classes coming from a proper coloring of the net graph $G^{2M}(X,T)$. For $x,y\in T$, write $y\prec x$ if $y\in I_i$ and $x\in I_j$ with $i<j$.

Set
\[
B:=B_{3r}(u)
\qquad\text{and}\qquad
B_x:=B_{t(x)}(x)\quad \text{for }x\in T.
\]
We say that $B$ is \emphd{cut by $B_x$} if
\[
B_y\cap B=\varnothing \quad\text{for all } y\prec x,
\]
and
\[
\varnothing \neq B_x\cap B \neq B.
\]
Clearly, $B$ is cut if and only if it is cut by some $B_x$.

Define $A_{\mathrm{far}}$ to be the event that there exists some $B_x$ cutting $B$ with $\dist(x,u)\ge M/2$, and define $A_{\mathrm{near}}$ to be the event that there exists some $B_x$ cutting $B$ with $\dist(x,u)<M/2$. Then
\[
\mb P[B_{3r}(u)\text{ is cut}]
\le
\mb P[A_{\mathrm{far}}]+\mb P[A_{\mathrm{near}}].
\]

We first estimate $\mb P[A_{\mathrm{far}}]$. Fix $x\in T$ with $\dist(x,u)\ge M/2$. If $B_x$ cuts $B$, then necessarily
\[
\dist(x,u)\le t(x)+3r,
\]
hence
\[
\mb P[B_x\text{ cuts }B]
\le
\mb P[t(x)\ge M/2-3r]
\le
4e^{-\lambda(M/2-3r)}
=
4e^{-(D-\frac32)\eps},
\]
where the second inequality follows from Lemma~\ref{lem:Estimate}\eqref{item:estimate 1}.

Moreover, if $B_x$ cuts $B$, then
\[
\dist(x,u)\le t(x)+3r \le M+3r=(2D+6)r.
\]
Hence the number of $x\in T$ that can possibly cut $B$ is bounded above by
\[
N^2(2D+6)^{\log_2 N}
=
N^3(D+3)^{\log_2 N},
\]
using Lemma~\ref{lem:DMgrowth}. Therefore,
\[
\mb P[A_{\mathrm{far}}]
\le
4N^3(D+3)^{\log_2 N}e^{-(D-\frac32)\eps}.
\]

Next we estimate $\mb P[A_{\mathrm{near}}]$. On the finite set $T\cap B_{M/2}(u)$, the partial order $\prec$ is actually total, because two distinct points in $T\cap B_{M/2}(u)$ cannot belong to the same color class. Let $Y$ be the $\prec$-smallest element of $T\cap B_{M/2}(u)$ such that
\[
B_Y\cap B\neq \varnothing.
\]
This is well-defined since $u\in T\cap B_{M/2}(u)$ and $B_u\cap B\neq \varnothing$.

For each $x\in T\cap B_{M/2}(u)$, the event $Y=x$ is equivalent to the conjunction of:
\begin{itemize}
    \item $t(x)\ge \dist(u,x)-3r$;
    \item $t(y)<\dist(u,y)-3r$ for all $y\prec x$ in $T\cap B_{M/2}(u)$.
\end{itemize}
If $A_{\mathrm{near}}$ occurs, then $B_Y$ cuts $B$. In particular, if
\[
t(Y)\ge \dist(u,Y)+3r,
\]
then $B_Y\supseteq B$ and $A_{\mathrm{near}}$ cannot occur. Therefore,
\begin{align*}
\mb P[A_{\mathrm{near}}\,|\,Y=x]
&\le
\mb P[t(x)<\dist(u,x)+3r\,|\,Y=x] \\
&=
\mb P[t(x)<\dist(u,x)+3r\,|\,t(x)\ge \dist(u,x)-3r] \\
&\le
2\lambda\cdot 6r \\
&=
12\eps,
\end{align*}
where the inequality follows from Lemma~\ref{lem:Estimate}\eqref{item:estimate 2} with $\beta=6r$.

Summing over all possible values of $Y$, we obtain
\[
\mb P[A_{\mathrm{near}}]
=
\sum_{x\in T\cap B_{M/2}(u)}
\mb P[Y=x]\,
\mb P[A_{\mathrm{near}}\,|\,Y=x]
\le
12\eps.
\]

Combining the two estimates gives
\[
\mb P[B_{3r}(u)\text{ is cut}]
\le
4N^3(D+3)^{\log_2 N}e^{-(D-\frac32)\eps}+12\eps.\qedhere
\]
\end{proof}

We now reformulate the existence of padded decompositions as a CSP to which the Lov\'asz Local Lemma can be applied. Given $m\in \N^+$, define a CSP
\[
\mc A_m: T \to^? [l,M]^m
\]
as follows. For each $u\in T$, let $A_{u,m}$ be the constraint with domain
\[
B_{M+3r}(u)\cap T
\]
such that a function
\[
\bm t=(t_1,\dots,t_m):T\to [l,M]^m
\]
satisfies $A_{u,m}$ if and only if $B_{3r}(u)$ is \emph{not} cut in at least one of the partitions
\[
\mc P_{t_1},\dots,\mc P_{t_m}.
\]
Then
\[
\mc A_m:=\{A_{u,m}:u\in T\}
\]
is a CSP. We equip $[l,M]^m$ with the product measure of $m$ copies of $\mr{Texp}(\lambda,M,l)$, that is, the coordinates $t_1,\dots,t_m$ are taken to be i.i.d. random variables with distribution $\mr{Texp}(\lambda,M,l)$.

\begin{lemma}\label{lem:CSP_N}
Under the assumptions of Lemma~\ref{lem:cut_N}, the CSP defined above satisfies
\[
\p(\mc A_m)\le
\left(
4N^3(D+3)^{\log_2 N}e^{-(D-\frac32)\eps}+12\eps
\right)^m
\]
and
\[
\d(\mc A_m)\le N^4(D+3)^{\log_2 N}-1.
\]
\end{lemma}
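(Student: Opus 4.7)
The plan is to handle the two bounds separately since they require different tools. For the probability bound $\p(\mc A_m)$, the key observation is that the constraint $A_{u,m}$ is violated precisely when $B_r(u)$ is cut in each of the $m$ partitions $\mc P_{t_1},\ldots,\mc P_{t_m}$. Since $t_1,\ldots,t_m$ are i.i.d.\ samples from $\mr{Texp}(\lambda,M,l)$, the events ``$B_r(u)$ is cut in $\mc P_{t_i}$'' are mutually independent across $i$. Applying Lemma~\ref{lem:cut_N} to each factor and multiplying yields
\[
\mb P[A_{u,m} \text{ is violated}] \le \left(4N^3(D+3)^{\log_2 N}e^{-(D-\frac32)\eps}+12\eps\right)^m,
\]
and taking the supremum over $u\in T$ gives the claimed bound on $\p(\mc A_m)$.

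For the neighborhood bound $\d(\mc A_m)$, I will simply count how many constraints $A_{v,m}$ can have a domain overlapping $\dom(A_{u,m})$. By construction, $\dom(A_{u,m})=B_{M+3r}(u)=B_{(2D+6)r}(u)$, so $\dom(A_{u,m})\cap\dom(A_{v,m})\ne\emptyset$ forces $\dist(u,v)<2(M+3r)=(4D+12)r$. Hence $N(A_{u,m})$ injects into $\bigl(B_{(4D+12)r}(u)\cap T\bigr)\setminus\{u\}$, and Lemma~\ref{lem:DMgrowth} (applied with radius $(4D+12)r$ and net-scale $r$) gives
\[
|N(A_{u,m})| \;\le\; N^{2}\bigl(4D+12\bigr)^{\log_2 N} - 1 \;=\; N^{2}\cdot 4^{\log_2 N}(D+3)^{\log_2 N} - 1 \;=\; N^{4}(D+3)^{\log_2 N} - 1,
\]
using the identity $4^{\log_2 N}=N^2$. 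Taking the supremum over $u$ yields the claimed bound on $\d(\mc A_m)$.

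There is no real obstacle here: the probability estimate is immediate from independence plus the single-partition cut bound of Lemma~\ref{lem:cut_N}, and the degree estimate is a direct application of the doubling net growth bound of Lemma~\ref{lem:DMgrowth} together with the elementary simplification $4^{\log_2 N}=N^2$. The only point to watch is to use the correct domain radius $M+3r=(2D+6)r$ (rather than $M$ alone) when converting an overlap condition into a distance condition, since that is the radius actually referenced in the definition of $A_{u,m}$.
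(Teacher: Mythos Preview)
Your proof is correct and essentially identical to the paper's own argument: the $\p(\mc A_m)$ bound comes from independence of the $m$ layers together with Lemma~\ref{lem:cut_N}, and the $\d(\mc A_m)$ bound comes from the overlap condition $\dist(u,v)\le 2(M+3r)=(4D+12)r$ followed by Lemma~\ref{lem:DMgrowth} and the simplification $4^{\log_2 N}=N^2$.
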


\begin{proof}
The bound for $\p(\mc A_m)$ follows immediately from Lemma~\ref{lem:cut_N}.

We now estimate $\d(\mc A_m)$. If
\[
\bigl(B_{M+3r}(u)\cap T\bigr)\cap \bigl(B_{M+3r}(v)\cap T\bigr)\neq \0,
\]
then necessarily
\[
\dist(u,v)\le 2M+6r.
\]
Hence, by Lemma~\ref{lem:DMgrowth},
\begin{align*}
\d(\mc A_m)
&\le
N^2\left(\frac{2M+6r}{r}\right)^{\log_2 N}-1 \\
&=
N^2(4D+12)^{\log_2 N}-1 \\
&=
N^4(D+3)^{\log_2 N}-1.
\end{align*}
Here the term $-1$ excludes the case $u=v$.
\end{proof}

\subsection{Assouad--Nagata dimension of doubling metric spaces}

\begin{theorem}\label{thm:double-pad}
Let $(X,\dist)$ be a doubling metric space with doubling constant $N$. Then there exists a constant $c=c(N)>0$ such that for every $r>0$, $X$ admits a $(3r,cr)$-padded decomposition with $\floor{\log_2 N}+1$ layers.
\end{theorem}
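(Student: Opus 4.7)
The plan is to apply the Lov\'asz Local Lemma to the CSP $\mc A_m$ constructed via the randomized ball-carving scheme. Fix $r>0$ and an $(r,r)$-net $T$ on $X$, and set $m \defeq \floor{\log_2 N}+1$. For parameters $\eps\in(0,1)$ and $D>1/\eps+1/2$ depending only on $N$ (to be chosen), let $\lambda \defeq \eps/(3r)$, $l\defeq 3r$, $M\defeq (2D+3)r$, and draw $m$ independent families of radii $t_1,\ldots,t_m\colon T\to[l,M]$ with each coordinate distributed as $\mr{Texp}(\lambda,M,l)$. Each $t_i$ yields a $2M$-bounded partition $\mc P_{t_i}$ of $X$ via the ball-carving scheme $(\star)$ from Section~\ref{subsec:ball-carving}, which is applicable because the net graph $G^{2M}(X,T)$ has uniformly bounded degree by Corollary~\ref{cor:DMBddDeg}.

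The desired property that for every $u\in T$ the ball $B_{3r}(u)$ lies entirely within a single cell of some $\mc P_{t_i}$ is exactly the statement that every constraint $A_{u,m}$ of $\mc A_m$ is satisfied. Hence it suffices to choose $\eps$ and $D$ so that the LLL hypothesis $e\cdot\p(\mc A_m)\cdot(\d(\mc A_m)+1)<1$ holds; by Lemma~\ref{lem:CSP_N} this reduces to
\[
e\,\Bigl(4N^3(D+3)^{\log_2 N}e^{-(D-3/2)\eps}+12\eps\Bigr)^{m}\,N^4(D+3)^{\log_2 N}\,<\,1.
\]

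The key observation is that $m=\floor{\log_2 N}+1 > \log_2 N$, so $N^{1/m}\le 2$ and the polynomial factor $N^4(D+3)^{\log_2 N}$ contributes only a bounded constant when one extracts an $m$-th root. Thus it suffices to enforce
\[
4N^3(D+3)^{\log_2 N}e^{-(D-3/2)\eps}+12\eps\,\le\,\delta(N)
\]
for a suitably small constant $\delta(N)$. One first selects $\eps$ small enough (as a function of $N$) to ensure $12\eps\le \delta(N)/2$, and then chooses $D$ of order $(\log N)/\eps$ so that the exponential decay dominates the polynomial factors in $D$ and $N$, making the first term at most $\delta(N)/2$. Both $\eps$ and $D$ can be taken as explicit functions of $N$ alone, and the side condition $D>1/\eps+1/2$ required by Lemma~\ref{lem:cut_N} is automatically respected by this choice.

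Applying Theorem~\ref{thm: LLL} then provides a deterministic tuple $(t_1,\ldots,t_m)$ satisfying every constraint, so $(\mc P_{t_1},\ldots,\mc P_{t_m})$ is a partition family of $X$ with $m=\floor{\log_2 N}+1$ layers in which every $B_{3r}(u)$ is contained in a single cell. Each $\mc P_{t_i}$ is $2M$-bounded with $2M=(4D+6)r$, so this is a $(3r,cr)$-padded decomposition with $c\defeq 4D+6$ depending only on $N$, as desired. The main technical obstacle is the simultaneous calibration of $\eps$ and $D$: the exponential factor $e^{-(D-3/2)\eps}$ must compete with the polynomial $N^3(D+3)^{\log_2 N}$, and it is precisely the choice $m=\floor{\log_2 N}+1$ — which makes $2^m>N$ — that renders the balance feasible and ultimately yields the sharp Assouad--Nagata bound.
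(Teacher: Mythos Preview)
Your overall strategy coincides with the paper's: apply the LLL to the CSP $\mc A_m$ from the ball-carving construction, with the parameter choices of Lemma~\ref{lem:cut_N} and Lemma~\ref{lem:CSP_N}, and read off $c=4D+6$. The difficulty lies entirely in verifying the LLL hypothesis, and here your calibration of $\eps$ and $D$ has a genuine gap.

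The claim that ``the polynomial factor $N^4(D+3)^{\log_2 N}$ contributes only a bounded constant when one extracts an $m$-th root'' is not correct: after the $m$-th root you obtain $(D+3)^{b/m}$ with $b=\log_2 N$, and although $b/m<1$, this still grows with $D$. Consequently the quantity you call $\delta(N)$ is really $(eN^4(D+3)^b)^{-1/m}$, which depends on $D$ and tends to $0$ as $D\to\infty$. Your scheme ``first pick $\eps$ so that $12\eps\le\delta(N)/2$, then pick $D$'' is therefore circular: you cannot fix $\eps$ against a target that is only determined after $D$ is chosen. Moreover, the suggested scaling $D\sim(\log N)/\eps$ gives $D\eps\sim\log N$, so $e^{-(D-3/2)\eps}\asymp N^{-c}$ is merely a fixed negative power of $N$, while the first term also carries $(D+3)^b\asymp\eps^{-b}(\log N)^b$; the factor $\eps^{-b}$ then blows up as $\eps\to 0$, and the exponential does not dominate.

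The paper resolves this by \emph{coupling} $\eps$ to $D$: with $b=\log_2 N$ and $m=\lfloor b\rfloor+1$, pick $\alpha\in(0,1)$ with $b/m+\alpha<1$ and set $\eps=(D+3)^{-b/m-\alpha}$. Then $12\eps(D+3)^{b/m}=12(D+3)^{-\alpha}\to 0$, and since $(D-3/2)\eps\asymp D^{1-b/m-\alpha}\to\infty$, the term $4N^3(D+3)^{b(m+1)/m}e^{-(D-3/2)\eps}$ also tends to $0$. Thus the entire left side of the $m$-th-root inequality tends to $0$ as $D\to\infty$ while the right side is a fixed positive constant depending only on $N$, and one simply takes $D$ large. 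Replacing your two-step selection by this single-parameter coupling fixes the argument.
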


\begin{proof}
Let
\[
b:=\log_2 N,\qquad m:=\floor{b}+1.
\]
Choose parameters as in Lemma~\ref{lem:cut_N},
\[
\lambda=\frac{\eps}{r},\qquad M=(2D+3)r,\qquad l=3r.
\]
Let $\mc A_m$ be the CSP constructed above. By Theorem~\ref{thm: LLL}, the CSP $\mc A_m$ has a solution provided that
\[
e\,
\left(
4N^3(D+3)^b e^{-(D-\frac32)\eps}+12\eps
\right)^m
\left(
N^4(D+3)^b
\right)
<1.
\]
Equivalently,
\[
\left(
4N^3(D+3)^b e^{-(D-\frac32)\eps}+12\eps
\right)^m
(D+3)^b
<
16^{-b}e^{-1}.
\]
Taking the $m$-th root gives
\[
4N^3(D+3)^{b(m+1)/m} e^{-(D-\frac32)\eps}
+
12\eps (D+3)^{b/m}
<
8^{-b/m}e^{-1/m}.
\]
Since $b/m<1$, choose $\alpha\in(0,1)$ such that
\[
\frac{b}{m}+\alpha<1,
\]
and set
\[
\eps=(D+3)^{-b/m-\alpha}.
\]
Then, as $D\to\infty$, the left-hand side tends to $0$, while the right-hand side is independent of $D$. Hence, for $D$ sufficiently large, the Lov\'asz Local Lemma applies.

We obtain a solution
\[
\bm t=(t_1,\dots,t_m):T\to [l,M]^m.
\]
By construction, the corresponding tuple
\[
(\mc P_{t_1},\dots,\mc P_{t_m})
\]
satisfies the padding condition in Definition~\ref{def:PadDecom}. Moreover, each element of each $\mc P_{t_i}$ has diameter at most $2M=(4D+6)r$. Thus, if we set
\[
c:=4D+6,
\]
then $(\mc P_{t_1},\dots,\mc P_{t_m})$ is a $(3r,cr)$-padded decomposition with $m=\floor{\log_2 N}+1$ layers.
\end{proof}

As a direct consequence, we obtain an upper bound the Assouad--Nagata dimension of doubling metric spaces: 
\begin{corollary}[Theorem~\ref{thm:VD}]
Every doubling metric space $(X,\dist)$ with doubling constant $N$ satisfies
\[
\andim(X)\le \floor{\log_2 N}.
\]
\end{corollary}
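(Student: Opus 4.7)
The plan is to combine the padded-decomposition existence theorem just established with the equivalent characterization of Assouad--Nagata dimension provided by Corollary~\ref{cor:PadtoBas}. Since all the heavy lifting (the randomized ball-carving construction plus the LLL verification) has already been done, the corollary should amount to a one-line invocation.

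Concretely, first I would apply the preceding theorem to produce, for every $r > 0$, a $(3r, cr)$-padded decomposition of $X$ with $\lfloor \log_2 N\rfloor + 1$ layers, where $c = c(N)$ is the absolute constant coming out of the LLL argument. The critical point to double-check is that $c$ is genuinely a function of the doubling constant $N$ alone and does not depend on the scale $r$; this is visible from the parameter choices $\lambda = \varepsilon/(3r)$, $M = (2D+3)r$, $l = 3r$, which all scale linearly with $r$, so that the resulting diameter bound $2M = (4D+6)r$ yields $c = 4D + 6$ with $D$ chosen as a function of $N$ only.

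Second, I would apply the ``moreover'' clause of Corollary~\ref{cor:PadtoBas}, which states that $\andim(X) \le n$ whenever there exists a uniform constant $c > 0$ such that $X$ admits a $(3r, cr)$-padded decomposition with $n+1$ layers for every $r > 0$. Taking $n = \lfloor \log_2 N\rfloor$ and using the $c$ from the first step gives exactly the asserted bound $\andim(X) \le \lfloor \log_2 N\rfloor$.

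I do not expect any genuine obstacle here, as this is a direct packaging of the preceding theorem through the padded-decomposition characterization. The only item warranting explicit mention in the write-up is the $r$-independence of $c$, which follows transparently from the linear-in-$r$ parameter scaling in the ball-carving construction.
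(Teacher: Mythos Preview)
Your proposal is correct and matches the paper's approach: the paper states this corollary as ``a direct consequence'' of the preceding theorem without further proof, and the mechanism is precisely the one you describe---feed the $(3r, cr)$-padded decomposition with $\lfloor \log_2 N\rfloor + 1$ layers into the ``moreover'' clause of Corollary~\ref{cor:PadtoBas}. Your additional remark that $c = 4D+6$ depends only on $N$ (via the choice of $D$) and not on $r$ is exactly the point that makes the invocation legitimate.
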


\begin{proof}
By Theorem~\ref{thm:double-pad} and Corollary~\ref{cor:PadtoBas}.
\end{proof}

\section{Metric measure spaces of polynomial growth and asymptotic dimension}\label{sec:asdim}

\subsection{A randomized ball-carving construction}

Let $(X,\dist)$ be a complete separable metric space and let $T$ be a $(1,1)$-net in $X$. Fix an integer $M>1$. For each function $t:T\to \{1,\ldots,M\}$, the ball-carving scheme from Section~\ref{subsec:ball-carving} produces a $2M$-bounded family $\mc P_t$ of subsets of $X$ whose traces on $T$ partition $T$.

Given $m\in \N^+$, let
\[
\mathbf t=(t_1,\ldots,t_m),\qquad t_i:T\to \{1,\ldots,M\}.
\]
Then $\mathbf t$ gives rise to an $m$-tuple
\[
(\mc P_{t_1},\ldots,\mc P_{t_m}).
\]
Our goal is to choose $\mathbf t$ so that this becomes an $(r,r^\alpha)$-padded decomposition for suitable parameters $r>1$, $\alpha>1$, and $m\in\N$.

To this end, we let the coordinates $t_1,\ldots,t_m$ be i.i.d. random variables distributed according to the \emphd{truncated geometric distribution} on $\{1,\ldots,M\}$ with parameter $p\in(0,1)$, denoted by $\mr{Tgeo}(p,M)$. Its density is given by
\[
\mb P[t=n]=
\begin{cases}
p(1-p)^{n-1}, & n=1,2,\ldots,M-1,\\
(1-p)^{M-1}, & n=M.
\end{cases}
\]
We will need the following elementary estimates.

\begin{lemma}\label{lem:Est}
For a random variable $t\sim\mr{Tgeo}(p,M)$, we have: 
\begin{enumerate}
    \item\label{item:esti 1} For every integer $n\in\{1,\ldots,M\}$,
    \[
    \mb P[t\ge n]=(1-p)^{n-1}.
    \]
    \item\label{item:esti 2} For all integers $m,n\ge 1$ such that $m+n<M$,
    \[
    \mb P[t\le m+n\,|\,t\ge m]=1-(1-p)^{n+1}.
    \]
\end{enumerate}
\end{lemma}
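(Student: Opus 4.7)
The plan is to prove both identities by direct computation from the density, using the geometric sum formula; there is no substantive obstacle here, only some care with the atom at $n=M$.

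For part \eqref{item:esti 1}, I would compute $\mb P[t<n]$ instead of $\mb P[t\ge n]$, since the former avoids the boundary mass at $M$ whenever $n\le M$. Concretely, for $n\in\{1,\ldots,M\}$, all indices $i=1,\ldots,n-1$ lie in $\{1,\ldots,M-1\}$, so the density formula gives $\mb P[t<n]=\sum_{i=1}^{n-1} p(1-p)^{i-1}$. Summing the geometric series and using $1-(1-p)=p$ yields $\mb P[t<n]=1-(1-p)^{n-1}$, hence $\mb P[t\ge n]=(1-p)^{n-1}$. One can sanity check: at $n=M$, the right-hand side is $(1-p)^{M-1}$, exactly the atom at $M$; at $n=1$, both sides are $1$.

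For part \eqref{item:esti 2}, the plan is to write the conditional probability as the ratio $\mb P[m\le t\le m+n]/\mb P[t\ge m]$ and then evaluate each factor via part \eqref{item:esti 1}. The hypothesis $m+n<M$, equivalently $m+n+1\le M$, guarantees that both $m$ and $m+n+1$ lie in $\{1,\ldots,M\}$, so part \eqref{item:esti 1} applies to both. The numerator becomes
\[
\mb P[t\ge m]-\mb P[t\ge m+n+1]=(1-p)^{m-1}-(1-p)^{m+n},
\]
and dividing by $\mb P[t\ge m]=(1-p)^{m-1}$ gives $1-(1-p)^{n+1}$, as claimed. Alternatively one can sum $\sum_{i=m}^{m+n}p(1-p)^{i-1}$ directly and factor out $(1-p)^{m-1}$, arriving at the same answer.

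The only subtle point is bookkeeping at the upper boundary of the support: the hypothesis $m+n<M$ in part \eqref{item:esti 2} is precisely what lets us treat every relevant value as belonging to the geometric part of the density, so the atom at $M$ never enters the calculation.
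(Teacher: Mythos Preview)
Your argument is correct and essentially identical to the paper's own proof: both compute $\mb P[t\ge n]$ via the complementary geometric sum $\sum_{i=1}^{n-1}p(1-p)^{i-1}$, and both obtain \eqref{item:esti 2} by writing the conditional probability as the ratio $\bigl((1-p)^{m-1}-(1-p)^{m+n}\bigr)/(1-p)^{m-1}$. Your additional remarks about the atom at $M$ and the role of the hypothesis $m+n<M$ are accurate and slightly more careful than the paper's terse computation.
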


\begin{proof}
This is a straightforward computation.

For \eqref{item:esti 1},
\[
\mb P[t\ge n]
=1-\sum_{i=1}^{n-1}p(1-p)^{i-1}
=(1-p)^{n-1}.
\]

For \eqref{item:esti 2},
\[
\mb P[t\le m+n\,|\,t\ge m]
=
\frac{\mb P[m\le t\le m+n]}{\mb P[t\ge m]}
=
\frac{(1-p)^{m-1}-(1-p)^{m+n}}{(1-p)^{m-1}}
=
1-(1-p)^{n+1}.
\qedhere
\]
\end{proof}

In the next lemma, we estimate the probability that a ball is cut under the truncated geometric distribution; this estimate will later be made sufficiently small for an application of the Lovász Local Lemma.

\begin{lemma}\label{lem:cut}
Let $(X,\dist)$ be a complete separable metric space, let $T$ be a $(1,1)$-net of $X$, and let
\[
t:T\to \{1,\ldots,M\}
\]
be a random function whose values are i.i.d. with distribution $\mr{Tgeo}(p,M)$. Fix $u\in T$, $r\ge 9$, and $b\ge 0$. Assume that
\[
p\le \frac{1}{4b+9}
\quad\text{ and }\quad
M=\left\lfloor 4(b+1)\frac{1}{p}\ln\frac{1}{p}\right\rfloor.
\]
If $\gamma(s)\le s^b$ for all $s\ge r$, then
\[
\mb P[B_r(u)\text{ is cut}\,]\le 20rp.
\]
\end{lemma}

Note that the assumption on $\gamma$ implies that the net graph $G^{2M}(X,T)$ has finite maximum degree, so the randomized ball-carving construction is well-defined.

\begin{proof}
We may assume that $rp\le 1$, since otherwise the statement is trivial. Also,
\[
\mb P[B_r(u)\text{ is cut}]
\le
\mb P[B_{\lceil r\rceil}(u)\text{ is cut}],
\]
and $\lceil r\rceil\le r+1\le 10r/9$ when $r\ge 9$. Thus it suffices to prove the estimate for integer $r\ge 9$, so from now on we assume that $r$ is an integer.

Let $I_0,\ldots,I_{k-1}$ be the color classes of a proper coloring of $G^{2M}(X,T)$. For $x,y\in T$, write $y\prec x$ if $y\in I_i$ and $x\in I_j$ with $i<j$.

Set
\[
B:=B_r(u),
\qquad
B_x:=B_{t(x)}(x)\quad\text{for }x\in T.
\]
We say that $B$ is \emphd{cut by $B_x$} if
\[
B_y\cap B=\0 \quad\text{for all }y\prec x,
\]
and
\[
\0\neq B_x\cap B\neq B.
\]
Clearly, $B$ is cut if and only if it is cut by some $B_x$.

Define $A_{\mathrm{far}}$ to be the event that there exists some $B_x$ cutting $B$ with
\[
\dist(x,u)\ge M-r,
\]
and define $A_{\mathrm{near}}$ to be the event that there exists some $B_x$ cutting $B$ with
\[
\dist(x,u)<M-r.
\]
Then
\[
\mb P[B_r(u)\text{ is cut}]
\le
\mb P[A_{\mathrm{far}}]+\mb P[A_{\mathrm{near}}].
\]

We first estimate $\mb P[A_{\mathrm{far}}]$. Fix $x\in T$ with $\dist(x,u)\ge M-r$. If $B_x$ cuts $B$, then necessarily
\[
\dist(x,u)\le t(x)+r,
\]
hence
\[
\mb P[B_x\text{ cuts }B]
\le
\mb P[t(x)\ge M-2r]
\le
(1-p)^{M-2r-1}.
\]
Since $r\le 1/p$, we obtain
\[
(1-p)^{M-2r-1}
\le
(1-p)^{-2/p-1}(1-p)^M.
\]
The function $z\mapsto (1-z)^{-2/z-1}$ is increasing on $(0,1)$, and
\[
p\le \frac{1}{4b+9}\le \frac15,
\]
so
\[
(1-p)^{-2/p-1}\le \left(\frac54\right)^{11}.
\]
Using $1-p\le e^{-p}$, we get
\[
(1-p)^M
\le
e^p\,p^{4b+4}
\le
e^{1/5}p^{4b+4}.
\]
Combining these estimates yields
\[
\mb P[B_x\text{ cuts }B]
\le
\left(\frac54\right)^{11}e^{1/5}p^{4b+4}
\le
15\,p^{4b+4}.
\]

If $B_x$ cuts $B$, then
\[
\dist(x,u)\le t(x)+r\le M+r,
\]
so the number of points $x\in T$ that can possibly cut $B$ is bounded by
\[
\gamma(M+r)\le (M+r)^b \le \left(\frac{4(b+1)\ln(1/p)+1}{p}\right)^b.
\]
Since $\ln(1/p)<1/p$ and $p\le 1/(4b+9)$, we have
\[
4(b+1)\ln(1/p)+1
\le
\frac{4(b+1)}{p}+1
\le
\frac{4b+5}{p}
\le
p^{-2}.
\]
Hence
\[
\gamma(M+r)\le p^{-3b}.
\]
Therefore,
\[
\mb P[A_{\mathrm{far}}]
\le
15\,p^{4b+4}\cdot p^{-3b}
=
15\,p^{b+4}
\le
15p.
\]

Next we estimate $\mb P[A_{\mathrm{near}}]$. On the finite set $T\cap B_{M-r}(u)$, the relation $\prec$ is in fact a total order, since two distinct points in $T\cap B_{M-r}(u)$ cannot belong to the same color class. Let $Y$ be the $\prec$-smallest element of $T\cap B_{M-r}(u)$ such that
\[
B_Y\cap B\neq \0.
\]
This is well-defined because $u\in T\cap B_{M-r}(u)$ and $B_u\cap B\neq \0$.

For each $x\in T\cap B_{M-r}(u)$, the event $Y=x$ is equivalent to:
\begin{itemize}
    \item $t(x)\ge \dist(u,x)-r$;
    \item $t(y)<\dist(u,y)-r$ for all $y\prec x$ in $T\cap B_{M-r}(u)$.
\end{itemize}
If $A_{\mathrm{near}}$ occurs and $Y=x$, then $B_Y$ cuts $B$, so in particular
\[
t(x)<\dist(u,x)+r.
\]
Together with the condition $t(x)\ge \dist(u,x)-r$, this forces the integer-valued variable $t(x)$ to lie in an interval of length at most $2r+1$. Therefore,
\[
\mb P[A_{\mathrm{near}}\,|\,Y=x]
\le
1-(1-p)^{2r+1}
\le
(2r+1)p
\le
3rp.
\]
Summing over all possible values of $Y$, we obtain
\[
\mb P[A_{\mathrm{near}}]
=
\sum_{x\in T\cap B_{M-r}(u)}
\mb P[Y=x]\,
\mb P[A_{\mathrm{near}}\,|\,Y=x]
\le
3rp.
\]

Finally,
\[
\mb P[B_r(u)\text{ is cut}]
\le
\mb P[A_{\mathrm{far}}]+\mb P[A_{\mathrm{near}}]
\le
15p+3rp
\le
18rp
\le
20rp.\qedhere
\]
\end{proof}

We can now phrase the existence of padded decompositions as a CSP to which the Lov\'asz Local Lemma applies. Fix an integer $M>1$, a real number $r>0$, and $m\in\N^+$. Define a CSP
\[
\mc A_m:T\to^?\{1,\ldots,M\}^m
\]
as follows. For each $u\in T$, let $A_{u,m}$ be the constraint with domain
\[
B_{M+r}(u)\cap T
\]
such that a function
\[
\mathbf t=(t_1,\ldots,t_m):T\to \{1,\ldots,M\}^m
\]
satisfies $A_{u,m}$ if and only if $B_r(u)$ is \emph{not} cut in at least one of the families
\[
\mc P_{t_1},\ldots,\mc P_{t_m}.
\]
Then
\[
\mc A_m:=\{A_{u,m}:u\in T\}
\]
is a CSP. We equip $\{1,\ldots,M\}^m$ with the product measure of $m$ copies of $\mr{Tgeo}(p,M)$, so that the coordinates $t_1,\ldots,t_m$ are i.i.d. with distribution $\mr{Tgeo}(p,M)$.

\begin{lemma}\label{lem:CSP}
Under the assumptions of Lemma~\ref{lem:cut}, the CSP defined above satisfies
\[
\p(\mc A_m)\le (20rp)^m
\]
and
\[
\d(\mc A_m)\le (2M+2r)^b-1.
\]
\end{lemma}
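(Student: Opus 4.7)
The plan is to verify the two bounds directly from the definitions, combining independence of the coordinates of $\mathbf t$ with the polynomial growth hypothesis $\gamma(s) \le s^b$.

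For the probability bound, I will first identify the domain structure. The event ``$B_r(u)$ is cut in $\mc P_{t_i}$'' depends only on the values $t_i(x)$ for $x \in T \cap B_{M+r}(u)$: to decide which cluster contains a given point $y \in B_r(u)$, the ball-carving scheme $(\star)$ only consults those $x \in T$ for which $B_{t_i(x)}(x) \ni y$, and since $t_i(x) \le M$ this forces $\dist(u,x) < M+r$. This is precisely why $B_{M+r}(u)$ is the natural choice of domain for $A_{u,m}$. Now $A_{u,m}$ is violated precisely when $B_r(u)$ is cut in every one of the $m$ partitions; since $t_1, \ldots, t_m$ are i.i.d.\ under $\mr{Tgeo}(p,M)$, the ``is cut'' events for distinct indices $i$ are mutually independent, so
\[
\P[A_{u,m}] \,=\, \prod_{i=1}^m \P[B_r(u) \text{ is cut in } \mc P_{t_i}] \,\le\, (20rp)^m
\]
by Lemma~\ref{lem:cut}, taking supremum over $u \in T$ gives the first bound.

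For the degree bound, I will simply count potential neighbors. A constraint $A_{v,m}$ with $v \ne u$ lies in $N(A_{u,m})$ only if $B_{M+r}(u) \cap B_{M+r}(v) \ne \0$, which forces $\dist(u,v) < 2(M+r) = 2M+2r$. Since $v \in T$ and $T$ is a $(1,1)$-net, the number of such $v$, including $v=u$ itself, is at most $\gamma(2M+2r) \le (2M+2r)^b$ by the growth hypothesis (note $2M+2r \ge r$). Subtracting the contribution of $v=u$ yields $|N(A_{u,m})| \le (2M+2r)^b - 1$.

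Neither step presents a substantive obstacle. The only subtle point to confirm carefully is the locality claim underlying the probability factorization --- namely that whether $B_r(u)$ is cut in $\mc P_{t_i}$ is determined solely by the restriction of $t_i$ to $T \cap B_{M+r}(u)$ --- which is seen by tracing through the inductive definition of $\mc P_t$. Once this is granted, the first bound reduces to Lemma~\ref{lem:cut} plus independence, and the second to a one-line ball-counting estimate.
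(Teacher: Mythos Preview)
Your proposal is correct and follows essentially the same approach as the paper's proof; the paper's version is simply terser, stating that $\p(\mc A_m)$ ``follows from Lemma~\ref{lem:cut}'' without spelling out the independence-based factorization, and giving the same triangle-inequality and $\gamma$-counting argument for $\d(\mc A_m)$. Your additional remarks on locality (why the domain $T\cap B_{M+r}(u)$ suffices) are a helpful elaboration but do not represent a different strategy.
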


\begin{proof}
The bound for $\p(\mc A_m)$ follows directly from Lemma~\ref{lem:cut}. For the dependency degree, note that if
\[
\bigl(B_{M+r}(u)\cap T\bigr)\cap \bigl(B_{M+r}(v)\cap T\bigr)\neq \0,
\]
then necessarily
\[
\dist(u,v)\le 2M+2r.
\]
Therefore,
\[
\d(\mc A_m)\le \gamma(2M+2r)-1\le (2M+2r)^b-1.
\]
The term $-1$ excludes the case $u=v$.
\end{proof}

\subsection{Asymptotic dimension of volume noncollapsed metric measure spaces of polynomial growth}

\begin{theorem}\label{thm:ExiPad}
Let $(X,\dist)$ be a complete separable metric space. Fix constants $\eps>0$ and $b\ge 0$. Set
\[
m:=\floor b+1,
\qquad
\alpha:=(1+\eps)\frac{m}{m-b}.
\]
Assume that
\[
r>
\max\left\{
9,\,
\bigl(8(b+1)(4b+9)\bigr)^{2/\alpha},\,
e^{1/(8\alpha(b+1))},\,
\left(\frac{8000\alpha(b+1)}{\eps}\right)^{2/\eps}
\right\}.
\]
If $\gamma(s)\le s^b$ for all $s\ge r$,
then $X$ admits an $(r,r^\alpha)$-padded decomposition with $m$ layers.
\end{theorem}

\begin{proof}
Let $T$ be a $(1,1)$-net of $X$. Choose
\[
p=\frac{8\alpha(b+1)\ln r}{r^\alpha},
\qquad
M=\left\lfloor 4(b+1)\frac{1}{p}\ln\frac{1}{p}\right\rfloor.
\]
We apply the randomized ball-carving construction above together with Lemmas~\ref{lem:cut} and \ref{lem:CSP}.

Note that if $y\ge 2$ and $z\ge y^2$, then $z/\ln z\ge y$. Applying this with
\[
y=8(b+1)(4b+9),\qquad z=r^\alpha,
\]
we obtain
\[
\frac{r^\alpha}{8\alpha(b+1)\ln r}
=
\frac{r^\alpha}{8(b+1)\ln(r^\alpha)}
\ge 4b+9.
\]
Hence
\[
p\le \frac{1}{4b+9},
\]
so the assumptions of Lemma~\ref{lem:cut} are satisfied.

Let $\mc A_m$ be the CSP defined above. We also observe that
\begin{equation}\label{eq:2M}
2M
\le
\frac{8(b+1)\ln(1/p)}{p}
=
\frac{r^\alpha\ln(1/p)}{\alpha\ln r}
\le
r^\alpha.
\end{equation}
Indeed, the last inequality follows from $1/p\le r^\alpha$, which is equivalent to
\[
8\alpha(b+1)\ln r\ge 1,
\]
and this holds because
\[
r>e^{1/(8\alpha(b+1))}.
\]

Since also $r\le r^\alpha$, Lemma~\ref{lem:CSP} and Theorem~\ref{thm: LLL} imply that $\mc A_m$ has a solution provided that
\[
e(20rp)^m(2M+2r)^b
<
e(20rp)^m(5r^\alpha)^b
<
1.
\]
The last inequality is equivalent to
\[
\frac{1}{p}
>
20\,e^{1/m}5^{b/m}\,r^{1+\alpha b/m}.
\]
Since $m>b$ and $m\ge 1$, we have
\[
e^{1/m}5^{b/m}\le 25,
\]
so it suffices to prove that
\[
\frac{r^\alpha}{8\alpha(b+1)\ln r}
>
500\,r^{1+\alpha b/m}.
\]
Because
\[
\alpha-\left(1+\frac{\alpha b}{m}\right)
=
\alpha\left(1-\frac{b}{m}\right)-1
=
(1+\eps)-1
=
\eps,
\]
it is enough to show that
\[
\frac{r^\eps}{\ln r}>4000\alpha(b+1).
\]
Using the elementary inequality
\[
\ln r\le \frac{2}{\eps}r^{\eps/2},
\]
it suffices to require
\[
r^{\eps/2}>\frac{8000\alpha(b+1)}{\eps},
\]
which is exactly ensured by the assumed lower bound on $r$.

Therefore the Lov\'asz Local Lemma applies, and we obtain a solution
\[
\mathbf t=(t_1,\ldots,t_m):T\to \{1,\ldots,M\}^m.
\]
By the ball-carving scheme, each family $\mc P_{t_i}$ has the property that its traces on $T$ partition $T$. Since $\mathbf t$ is a solution to $\mc A_m$, for every $u\in T$ the ball $B_r(u)$ is contained in one cluster in at least one layer. Finally, every cluster in every $\mc P_{t_i}$ has diameter at most $2M\le r^\alpha$ by \eqref{eq:2M}. Hence
\[
(\mc P_{t_1},\ldots,\mc P_{t_m})
\]
is an $(r,r^\alpha)$-padded decomposition with $m$ layers.
\end{proof}

Now we are ready to bound the asymptotic dimension of volume noncollapsed metric measure spaces of polynomial growth:

\begin{corollary}[Theorem~\ref{thm:measure}]
Let $(X,\dist,\meas)$ be a metric measure space. If $(X,\dist,\meas)$ is proper, has polynomial volume growth, and is volume noncollapsed, then
\[
\asdim(X)\le \floor{\rho^V(X)}.
\]
\end{corollary}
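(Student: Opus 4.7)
The plan is to combine three ingredients already in place: the comparison between metric and volume growth rates from Lemma \ref{lem:FiniteNet}(\ref{item:LemF2}), the padded decomposition construction of Theorem \ref{thm:ExiPad}, and the reformulation of asymptotic dimension via padded decompositions in Corollary \ref{cor:PadtoBas}.

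First I would set $\rho \defeq \rho^V(X)$, which is finite by hypothesis. Since $X$ is proper and volume noncollapsed, Lemma \ref{lem:FiniteNet}(\ref{item:LemF2}) transfers this polynomial volume control to the metric growth function, giving $\rho(X) \le \rho$. Next I would pick a parameter $b$ in the open interval $(\rho, \floor{\rho}+1)$, which is nonempty whether or not $\rho$ is an integer. This choice guarantees simultaneously that $b > \rho(X)$ and $\floor{b} = \floor{\rho}$. From $\rho(X) < b$ and the definition of $\rho(X)$ as a $\limsup$, the metric growth function satisfies $\gamma(s) \le s^b$ for all sufficiently large $s$ (the gap between $\log(s+1)$ and $\log s$ in the definition of $\rho(X)$ is harmless once the inequality $\rho(X) < b$ is strict).

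Then, for any fixed small $\eps > 0$, set $m \defeq \floor{b}+1 = \floor{\rho}+1$ and $\alpha \defeq (1+\eps)\frac{m}{m-b}$. For every $r$ simultaneously exceeding the explicit thresholds listed in Theorem \ref{thm:ExiPad} and large enough that $\gamma(s) \le s^b$ for all $s \ge r$, Theorem \ref{thm:ExiPad} produces an $(r, r^\alpha)$-padded decomposition of $X$ with $m = \floor{\rho}+1$ layers. Since such a decomposition exists for all sufficiently large $r$, Corollary \ref{cor:PadtoBas} concludes $\asdim(X) \le \floor{\rho} = \floor{\rho^V(X)}$.

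The argument is mostly bookkeeping once Theorem \ref{thm:ExiPad} is in hand; the only point requiring care is the choice of $b$. It must strictly exceed $\rho^V(X)$ to activate the growth hypothesis of Theorem \ref{thm:ExiPad} (and to make $\alpha$ finite, which needs $b < m$), while also satisfying $\floor{b} = \floor{\rho^V(X)}$ so that the layer count produced by Theorem \ref{thm:ExiPad} yields the sharp bound rather than a dimension one too large.
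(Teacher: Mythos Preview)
Your proposal is correct and follows essentially the same route as the paper: invoke Lemma \ref{lem:FiniteNet}(\ref{item:LemF2}) to pass from volume to metric growth, choose $b$ just above the growth rate with $\floor{b}+1=m$, apply Theorem \ref{thm:ExiPad}, and finish with Corollary \ref{cor:PadtoBas}. The only cosmetic difference is that the paper first proves $\asdim(X)\le\floor{\rho(X)}$ by choosing $b\in(\rho(X),\floor{\rho(X)}+1)$ and then appeals to $\rho(X)\le\rho^V(X)$, whereas you pick $b\in(\rho^V(X),\floor{\rho^V(X)}+1)$ directly; both choices satisfy the hypotheses of Theorem \ref{thm:ExiPad} and yield the stated bound.
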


\begin{proof}
By Lemma~\ref{lem:FiniteNet}\eqref{item:LemF2}, we have
\[
\rho(X)\le \rho^V(X)<\infty.
\]
Set
\[
\rho:=\rho(X),
\qquad
m:=\floor{\rho}+1.
\]
Choose numbers $b_0,b$ such that
\[
\rho<b_0<b<m.
\]
By the definition of $\rho(X)$, there exists $r_0$ such that for all $s\ge r_0$,
\[
\gamma(s)\le (s+1)^{b_0}.
\]
After enlarging $r_0$ if necessary, we may also assume that
\[
(s+1)^{b_0}\le s^b
\qquad\text{for all } s\ge r_0.
\]
Fix $\eps>0$ and let
\[
\alpha:=(1+\eps)\frac{m}{m-b}.
\]
Then Theorem~\ref{thm:ExiPad} yields, for all sufficiently large $r$, an $(r,r^\alpha)$-padded decomposition with $m$ layers. By Corollary~\ref{cor:PadtoBas}, we infer that
\[
\asdim(X)\le m-1=\floor{\rho(X)}.
\]
Using again Lemma~\ref{lem:FiniteNet}\eqref{item:LemF2}, we obtain
\[
\rho(X)\le \rho^V(X),
\]
and therefore
\[
\asdim(X)\le \floor{\rho^V(X)}.
\]
\end{proof}

\section{Nilmanifolds and equality in Theorem~\ref{thm:measure}}\label{sec:rigidity}

In this section we review some standard facts about nilpotent groups and nilmanifolds in order to analyze equality in Theorem~\ref{thm:measure}. More precisely, we consider the universal cover of a closed nilmanifold. If the universal cover of a closed manifold has polynomial volume growth, then so does the word-metric growth of the fundamental group of the base manifold. By Gromov's theorem \cite{GrmovPoly}, this group is virtually nilpotent, so nilmanifolds arise naturally in this setting.

In this class of examples we may replace asymptotic dimension by Assouad--Nagata dimension, since they coincide for finitely generated nilpotent groups. Along the way, we also see that universal covers of nilmanifolds provide natural examples in which asymptotic dimension and Assouad--Nagata dimension do not, in general, recover the polynomial growth rate.

Let $G$ be a finitely generated group, always equipped with a word metric. Define inductively
\[
G_0\defeq G,
\qquad
G_{i+1}\defeq [G,G_i],
\qquad i\in \N.
\]
Then $G_{i+1}\trianglelefteq G_i$ and $G_i/G_{i+1}$ is Abelian. We call $G$ \emphd{nilpotent} if there exists $s\in\N^+$ such that $G_s=\{\mr{id}\}$. The smallest such $s$ is the \emphd{step} of $G$.

We define the \emphd{rank} of a finitely generated nilpotent group $G$ by
\[
\mr{rank}(G)\defeq \sum_{i=0}^{s-1}\mr{rank}(G_i/G_{i+1}),
\]
where $\mr{rank}(G_i/G_{i+1})$ denotes the rank of the finitely generated Abelian group $G_i/G_{i+1}$. This is the Hirsch length of $G$. Recall that rank is additive in short exact sequences of finitely generated nilpotent groups.

We also define the \emphd{homogeneous dimension} of $G$ by
\[
\dim_H(G)\defeq \sum_{i=0}^{s-1}(i+1)\mr{rank}(G_i/G_{i+1}).
\]
Then $\dim_H(G)\ge \mr{rank}(G)$, with equality if and only if $G$ is Abelian. Two standard facts are relevant here:
\begin{itemize}
    \item by Bass' theorem \cite{Bass}*{Theorem~2}, the metric growth rate of $G$ is
    \[
    \rho(G)=\dim_H(G);
    \]
    \item by \cite{HigesPeng}*{Corollary~5.10} and \cite{bell2008asymptotic}*{Corollary~68},
    \[
    \andim(G)=\mr{rank}(G)=\asdim(G).
    \]
\end{itemize}
Thus, for a finitely generated non-Abelian nilpotent group, the polynomial growth rate is strictly larger than both its asymptotic dimension and its Assouad--Nagata dimension.

\begin{example}
The $3$-dimensional integer Heisenberg group $H^3(\Z)$ satisfies
\[
\dim_H(H^3(\Z))=\rho(H^3(\Z))=4,
\qquad
\asdim(H^3(\Z))=\andim(H^3(\Z))=\mr{rank}(H^3(\Z))=3.
\]
\end{example}

We now pass from nilpotent groups to nilmanifolds.

\begin{definition}
A closed manifold $M$ is a \emphd{nilmanifold} if it is a quotient
\[
M=L/\Gamma,
\]
where $L$ is a simply connected nilpotent Lie group and $\Gamma\le L$ is a discrete cocompact subgroup.
\end{definition}

Then $\pi_1(M)=\Gamma$ and $L$ is the universal cover $\widetilde M$ of $M$. If $L$ is equipped with a left-invariant Riemannian metric, then the action of $\Gamma$ on $L$ is by isometries, and this metric descends to a Riemannian metric on $M$. Recall also that nilmanifolds are almost flat, and conversely every almost flat manifold is a infranilmanifold, in particular, it is finitely covered by a nilmanifold \cites{AFGromov,AFRuh}.

The \v{S}varc--Milnor lemma reveals that a nilmanifold is quasi-isometric to a nilpotent group, its fundamental group.

\begin{lemma}[\v{S}varc--Milnor lemma \cite{bell2008asymptotic}*{Theorem~51}]\label{lem:MS}
Let $(M,g)$ be a closed Riemannian manifold. Then $\pi_1(M)$ with word metric is quasi-isometric to $(\widetilde M,\widetilde g)$, and
\[
\rho^V(\widetilde M)=\rho(\pi_1(M)).
\]
In particular, for any point $x\in M$ and any lift $\widetilde x$, the orbit map
\[
f:\pi_1(M)\to \widetilde M,
\qquad
f(h)=h\widetilde x,
\]
is a quasi-isometry.
\end{lemma}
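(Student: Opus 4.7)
The plan is to follow the classical proof of the \v{S}varc--Milnor lemma, adapted so that we also extract the identity of growth rates at the end. Throughout I write $\Gamma \defeq \pi_1(M)$, acting on $(\tilde M,\tilde g)$ by deck transformations (which are isometries of $\tilde g$), and I fix a basepoint $x\in M$ with a lift $\tilde x\in\tilde M$.

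First I would set up the geometric data using compactness. Since $M$ is compact, there exists $R>0$ such that the projection $p$ maps $\ols{B_R(\tilde x)}$ onto $M$; equivalently, $\Gamma\cdot B_R(\tilde x)=\tilde M$, so $\Gamma\cdot\tilde x$ is an $R$-net in $\tilde M$. The action is properly discontinuous, hence the set
\[
S \defeq \{\, g\in\Gamma\setminus\{\mathrm{id}\}\ :\ \tilde{\mathrm d}(\tilde x, g\tilde x)\le 2R+1\,\}
\]
is finite. I would show $S$ generates $\Gamma$ by a standard path-lifting argument: given $g\in\Gamma$, join $\tilde x$ to $g\tilde x$ by a minimizing geodesic in $\tilde M$, subdivide it into $N\le \lceil \tilde{\mathrm d}(\tilde x,g\tilde x)/1\rceil+1$ pieces each of length $\le 1$, approximate each endpoint by an element of the net $\Gamma\cdot\tilde x$, and read off a word in $S$ of length $\le N$.

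Second, I would prove that $f(g)=g\tilde x$ is a quasi-isometry. The upper Lipschitz bound is immediate from the triangle inequality: if $g=s_1\cdots s_n$ with $s_i\in S$, then
\[
\tilde{\mathrm d}(\tilde x,g\tilde x)\le \sum_{i=1}^{n}\tilde{\mathrm d}(s_1\cdots s_{i-1}\tilde x,s_1\cdots s_i\tilde x)=\sum_{i=1}^{n}\tilde{\mathrm d}(\tilde x,s_i\tilde x)\le (2R+1)|g|_S,
\]
using that $\Gamma$ acts by isometries. The lower Lipschitz bound comes from the path-lifting argument above: it gives $|g|_S\le \tilde{\mathrm d}(\tilde x,g\tilde x)+C$ for a uniform constant $C$. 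Finally the image $\Gamma\cdot\tilde x$ is an $R$-net in $\tilde M$, so $f$ is quasi-surjective. Together these yield a quasi-isometry $\Gamma\to\tilde M$.

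For the growth-rate equality $\rho^V(\tilde M)=\rho(\Gamma)$, I would compare volumes of metric balls in $\tilde M$ with orbit counts. Let $v_0\defeq\mathrm{vol}_{\tilde g}(B_R(\tilde x))$ and $V_0\defeq \mathrm{vol}_{\tilde g}(B_{2R}(\tilde x))$, both finite and positive by compactness of $\ols{B_R(\tilde x)}$ and $\ols{B_{2R}(\tilde x)}$. Since $\Gamma\cdot\tilde x$ is an $R$-net and the $\Gamma$-translates of a Borel fundamental domain contained in $B_R(\tilde x)$ tile $\tilde M$ with volume $v_0/|\mathrm{Stab}(\tilde x)|=v_0$, a standard packing/covering argument gives
\[
\bigl|\{g\in\Gamma:g\tilde x\in B_{r-R}(\tilde x)\}\bigr|\cdot v_0\ \le\ \mathrm{vol}_{\tilde g}(B_r(\tilde x))\ \le\ \bigl|\{g\in\Gamma:g\tilde x\in B_{r+R}(\tilde x)\}\bigr|\cdot V_0.
\]
Combining this with the two-sided Lipschitz estimates relating $\tilde{\mathrm d}(\tilde x,g\tilde x)$ to $|g|_S$, together with $\Gamma$-invariance of $\tilde{\mathrm d}$ (which shows the supremum in $V(r)$ is attained at $\tilde x$, up to a constant), the volume function $V(r)$ of $\tilde M$ and the metric growth function $\gamma(r)$ of $\Gamma$ are each sandwiched between constant dilations of the other. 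Taking $\log(\cdot)/\log(r+1)$ and letting $r\to\infty$ gives $\rho^V(\tilde M)=\rho(\Gamma)$.

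The main obstacle is really bookkeeping rather than difficulty: one must be careful that the ``metric growth rate'' $\rho$ of the paper (defined via $(1,1)$-nets) coincides, for the finitely generated group $\Gamma$ with word metric, with the exponent governing $|\{g:|g|_S\le r\}|$; this is where the finiteness of $S$ and the fact that $\Gamma$ itself is a $(1,1)$-net in its Cayley graph are used. Once this identification is in place, every step above reduces to elementary estimates in metric geometry.
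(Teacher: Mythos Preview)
The paper does not supply its own proof of this lemma; it is stated with a citation to \cite{bell2008asymptotic}*{Theorem 51} as a known result, and the text proceeds directly to Proposition~\ref{prop:Rigidity}. Your argument is a correct and complete rendition of the standard \v{S}varc--Milnor proof, together with the packing/covering comparison needed for the growth-rate identity $\rho^V(\tilde M)=\rho(\pi_1(M))$; the only cosmetic point is that deck transformations act freely, so the stabilizer you mention is automatically trivial. (For what it is worth, the paper's source file contains a commented-out partial proof using the Dirichlet fundamental domain at $\tilde x$ in place of your ball $B_R(\tilde x)$, which amounts to the same argument.)
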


Since the universal covering map is a local isometry, $(\widetilde M,\widetilde g)$ is automatically volume noncollapsed. This allows us to characterize the equality case in Theorem~\ref{thm:measure} for nilmanifolds.

\begin{proposition}\label{prop:Rigidity}
Let $M=L/\Gamma$ be a nilmanifold. The following are equivalent:
\begin{enumerate}
    \item\label{item:asim=vol} for some (equivalently, every) Riemannian metric on $M$,
    \[
    \rho^V(\widetilde M)=\asdim(\widetilde M)=\andim(\widetilde M);
    \]
    \item\label{item:dh=rank}
    \[
    \dim_H(\pi_1(M))=\mr{rank}(\pi_1(M));
    \]
    \item\label{item:pi1Abelian} $\pi_1(M)$ is Abelian;
    \item\label{item:Diffeo} $M$ is diffeomorphic to a torus.
\end{enumerate}
\end{proposition}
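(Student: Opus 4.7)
My plan is to prove the four conditions equivalent via the cycle $(1) \Leftrightarrow (2) \Leftrightarrow (3) \Leftrightarrow (4)$, treating each implication as essentially a bookkeeping exercise that strings together the cited theorems, and then identify which step requires genuine structural input.

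First, I would establish $(1) \Leftrightarrow (2)$. Fix any Riemannian metric on $M$ and pass to $(\tilde M,\tilde g)$. By the \v{S}varc--Milnor lemma (Lemma~\ref{lem:MS}), $\pi_1(M)$ with its word metric is quasi-isometric to $\tilde M$, and in particular $\rho^V(\tilde M)=\rho(\pi_1(M))$. By Bass's theorem, $\rho(\pi_1(M)) = \dim_H(\pi_1(M))$. Since both $\asdim$ and $\andim$ are quasi-isometry invariants, and since by the results of Higes--Peng \cite{HigesPeng} and \cite{bell2008asymptotic} we have $\asdim(\pi_1(M))=\andim(\pi_1(M))=\mr{rank}(\pi_1(M))$, it follows that $\asdim(\tilde M)=\andim(\tilde M)=\mr{rank}(\pi_1(M))$. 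Therefore $(1)$ is the identity $\dim_H(\pi_1(M))=\mr{rank}(\pi_1(M))$, which is exactly $(2)$.

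Next, $(2) \Leftrightarrow (3)$ is immediate from the general fact, noted right after the definition of $\dim_H$, that for any finitely generated nilpotent group $G$ one has $\dim_H(G) \ge \mr{rank}(G)$ with equality if and only if $G$ is Abelian. One can see this directly from the formulas $\dim_H(G) = \sum_{i=0}^{s-1}(i+1)\mr{rank}(G_i/G_{i+1})$ and $\mr{rank}(G) = \sum_{i=0}^{s-1}\mr{rank}(G_i/G_{i+1})$: equality forces $\mr{rank}(G_i/G_{i+1})=0$ for all $i\ge 1$, which for a finitely generated torsion-free nilpotent group means the lower central series terminates after one step.

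The implication $(4) \Rightarrow (3)$ is trivial since $\pi_1(\mathbb T^n)=\Z^n$. The remaining implication $(3) \Rightarrow (4)$ is the one place where real content beyond dimension bookkeeping is needed, and I expect it to be the main obstacle. The tool I would invoke is Malcev's rigidity theorem for nilmanifolds: a simply connected nilpotent Lie group $L$ is determined up to isomorphism by any of its lattices $\Gamma$, and moreover the nilpotency step of $\Gamma$ coincides with that of $L$. Applied to $M=L/\Gamma$ with $\Gamma=\pi_1(M)$ Abelian, this forces $L$ to be an Abelian simply connected nilpotent Lie group, hence $L\cong \R^n$ where $n=\mr{rank}(\Gamma)$, and the quotient $M=\R^n/\Gamma$ is diffeomorphic to $\mathbb T^n$. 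The only subtlety is to cite Malcev's theorem in a form that gives diffeomorphism (not just an abstract group isomorphism of $\pi_1$), but this is standard in the nilmanifold literature and should close the cycle.
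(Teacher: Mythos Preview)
Your argument for $(1)\Leftrightarrow(2)$ and $(2)\Leftrightarrow(3)$ tracks the paper's almost verbatim. One slip: $\andim$ is \emph{not} a quasi-isometry invariant (any bounded space is quasi-isometric to a point), so you cannot transfer $\andim(\pi_1(M))$ to $\andim(\tilde M)$ that way. The paper, for its part, simply omits the $\andim$ clause when proving $(1)\Leftrightarrow(2)$ and only checks $\rho^V(\tilde M)=\asdim(\tilde M)$; so this is a shared gap rather than a divergence, but you should not fill it with a false claim. The conclusion is nonetheless true: $\tilde M$ is an $n$-manifold so $\andim(\tilde M)\ge n$, and the upper bound follows from Higes--Peng for connected Lie groups; in fact $\asdim(\tilde M)=\andim(\tilde M)=n=\mr{rank}(\pi_1(M))$ holds for \emph{every} compact nilmanifold, so the content of (1) lies entirely in whether $\rho^V(\tilde M)=n$.

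For $(3)\Rightarrow(4)$ you take a genuinely different and shorter route. The paper argues topologically: a nilmanifold is an iterated principal $S^1$-bundle, giving $\mr{rank}(\pi_1(M))=n$; since $\pi_1(M)$ acts freely on $\tilde M\cong\R^n$ it has finite cohomological dimension and is therefore torsion-free; hence $\pi_1(M)\cong\Z^n$, $M$ is a $K(\Z^n,1)$, and finally a rigidity theorem of Lee--Raymond says nilmanifolds are determined up to diffeomorphism by their homotopy type. Your Malcev argument bypasses this chain in one stroke: a lattice determines its ambient simply connected nilpotent Lie group and shares its nilpotency class, so an Abelian $\Gamma$ forces $L\cong\R^n$ and $M\cong\mathbb T^n$. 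Both are correct; yours is cleaner and invokes a single classical theorem, while the paper's approach has the virtue of making the torsion-freeness of $\pi_1(M)$ explicit rather than absorbing it into the Malcev citation.
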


\begin{proof}
The equivalence \eqref{item:dh=rank} $\Leftrightarrow$ \eqref{item:pi1Abelian} is immediate from the definition of the homogeneous dimension:
\[
\dim_H(G)=\sum_{i=0}^{s-1}(i+1)\mr{rank}(G_i/G_{i+1}),
\qquad
\mr{rank}(G)=\sum_{i=0}^{s-1}\mr{rank}(G_i/G_{i+1}),
\]
so equality holds if and only if $s=1$, i.e. $G$ is Abelian.

We next prove \eqref{item:asim=vol} $\Leftrightarrow$ \eqref{item:dh=rank}. By Lemma~\ref{lem:MS}, the universal cover $\widetilde M$ is quasi-isometric to $\pi_1(M)$, hence
\[
\asdim(\widetilde M)=\asdim(\pi_1(M)).
\]
Since $\pi_1(M)$ is finitely generated nilpotent, we have by \cite{bell2008asymptotic}*{Theorem~71} that
\[
\asdim(\pi_1(M))=\mr{rank}(\pi_1(M)),
\]
and likewise by \cite{HigesPeng}*{Corollary~5.10} that
\[
\andim(\pi_1(M))=\mr{rank}(\pi_1(M)).
\]
On the other hand, Lemma~\ref{lem:MS} also gives
\[
\rho^V(\widetilde M)=\rho(\pi_1(M)),
\]
and \cite{Bass}*{Theorem~2} implies
\[
\rho(\pi_1(M))=\dim_H(\pi_1(M)).
\]
Therefore,
\[
\rho^V(\widetilde M)=\asdim(\widetilde M)=\andim(\widetilde M)
\]
if and only if
\[
\dim_H(\pi_1(M))=\mr{rank}(\pi_1(M)).
\]

Finally, we prove \eqref{item:pi1Abelian} $\Leftrightarrow$ \eqref{item:Diffeo}. The implication \eqref{item:Diffeo} $\Rightarrow$ \eqref{item:pi1Abelian} is obvious. Conversely, assume that $\Gamma=\pi_1(M)$ is Abelian. Since $L$ is connected, simply connected and nilpotent, $\exp: \mathfrak l\to L$ is a diffeomorphism with inverse $\log: L\to \mathfrak l$, where $\mathfrak l$ is the Lie algebra of $L$. Since $\Gamma$ is a cocompact lattice subgroup, $\log(\Gamma)$ spans $\mathfrak l$. Because $\Gamma$ is Abelian, $\log(\Gamma)$ is also Abelian, hence $\mathfrak l$ itself is Abelian. Therefore
\[
\Gamma\cong \Z^n
\qquad\text{and}\qquad
L\cong \R^n.
\]
It follows that
\[
M=L/\Gamma \cong \R^n/\Z^n=\mathbb T^n,
\]
so $M$ is diffeomorphic to a torus.
\end{proof}

\appendix

\appendix
\section{A simplicial argument \`a la Gromov and largeness}\label{sec:Gromov}

In this appendix we record a complementary simplicial argument in the spirit of Gromov. It yields the sharp upper bound of the Assouad--Nagata dimension and is closely related to the largeness discussion in Proposition~\ref{prop:Euc} for asymptotic Assouad--Nagata dimension. 

Let $(X,\dist)$ be a metric space. A simplicial complex $P$ is called \emphd{uniform} if it carries the metric induced by the restriction of the Euclidean metric on $\ell^2(P^0)$, where $P^0$ denotes the vertex set of $P$. For $D>0$, a map $f:X\to P$ is \emphd{$D$-cobounded} if
\[
\diam f^{-1}(\sigma)\le D
\]
for every simplex $\sigma\subset P$.

\begin{proposition}[Dranishnikov--Smith \cite{andim}*{Proposition~1.6, 1.7}]\label{prop:asanequi}
Let $(X,\dist)$ be a metric space. Then:
\begin{enumerate}
    \item $\asdim(X)\le n$ if for every $\eps>0$ there exists $D(\eps)>0$ and a uniform simplicial complex $P$ of dimension $n$ together with an $\eps$-Lipschitz, $D(\eps)$-cobounded map $\phi:X\to P$;
    \item $\asandim(X)\le n$ if there exist $\bar\eps>0$ and $D>0$ such that for every $0<\eps<\bar\eps$ there exists a uniform simplicial complex $P$ of dimension $n$ and an $\eps$-Lipschitz, $D/\eps$-cobounded map $\phi:X\to P$;
    \item $\andim(X)\le n$ if there exists $D>0$ such that for every $\eps>0$ there exists a uniform simplicial complex $P$ of dimension $n$ and an $\eps$-Lipschitz, $D/\eps$-cobounded map $\phi:X\to P$.
\end{enumerate}
\end{proposition}

\begin{remark}
In \cite{andim}, the statement corresponding to item~(2) is formulated for the \emph{asymptotic} Assouad--Nagata dimension, which accounts for the upper bound $\bar\eps$. Item~(3) is the uniform version relevant to the Assouad--Nagata dimension.
\end{remark}

For reader's convenience we review the history. Gromov \cite{VolBddCohom} first proved the sharp macroscopic dimension bound $\dim_{\mathrm{ms}}\le n\in \N$ for $n$-manifolds under nonnegative sectional curvature assumption. This argument was observed to hold for manifolds of nonnegative Ricci curvature by \cites{CaiLargemanifold,ShenLarge}. The argument only needs (global) volume doubling property to control intersection of balls, which is provided by the Bishop--Gromov volume comparison theorem, a consequence of nonnegative Ricci curvature. The sharp bound $\dim_{\mathrm{ms}}\le n$ comes from, again, the fact that $\cd\le 2^n$ and $\dim_{\mathrm{ms}}\le \floor{\log_2 \cd}$.

The macroscopic dimension of a metric space $X$ is defined as the smallest dimension of a simplicial complex $P$ such that there exists a continuous and cobounded map from $X$ to $P$. Proposition \ref{prop:asanequi} pointed out the relation between the macroscopic dimension, asymptotic dimension and asymptotic Assouad--Nagata dimension:
\[
\dim_{\mathrm{ms}}\le \asdim\le \asandim\le \andim.
\]
Indeed, in the definition of the macroscopic dimension, the metric on the simplicial complex is not required to be uniform and the continuous map from $X$ to $P$ is not required to be Lipschitz. However when Gromov proved the macroscopic dimension bound, the simplicial complex he constructed is in fact uniform, and the continuous function from $X$ to $P$ is in fact Lipschitz. Since his goal was the macroscopic dimension, he did not emphasize these facts. With Proposition \ref{prop:asanequi}, Gromov's argument provides the stronger sharp Assouad--Nagata dimension upper bound $\andim\le \floor{\log_2 \cd}$ hence solves the conjecture of Papasoglu \cite{papasoglu2021polynomial}.    

We point out that Gromov's argument also recovers the Theorem of Rajala--Le Donne \cite{RajalaLeDonne}. That is, the volume doubling condition can be replaced by the metric doubling condition, to provide a similar sharp Assouad--Nagata dimension in terms of the metric doubling constant, as the two doubling conditions control the intersection of balls in the same way.  

\begin{theorem}[Gromov \cite{VolBddCohom}*{Section~3.4}]\label{thm:cobddLip}
Let $(X,\dist)$ be a doubling metric space with doubling constant $N$. Then there exists $D=D(N)$ such that for every $\eps>0$ there exist a uniform simplicial complex $P$ of dimension $\floor{\log_2 N}$ and an $\eps$-Lipschitz, $D/\eps$-cobounded map
\[
F:X\to P.
\]
In particular $\andim(X)\le \floor{\log_2 N}$.
\end{theorem}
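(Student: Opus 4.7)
The plan is to build $F$ in two stages: first produce a Lipschitz, cobounded map into the nerve of a well-chosen ball cover, and then invoke Gromov's iterative dimension reduction to cut the dimension of the target down to $\floor{\log_2 N}$.

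First I would fix $r \defeq D_0/\eps$ for a constant $D_0 = D_0(N)$ to be chosen, pick an $(r,r)$-net $T = \{x_i\} \subset X$, and consider the open cover $\mathcal{U} \defeq \{B_r(x_i)\}$. By Lemma~\ref{lem:DMgrowth}, for every $x \in X$ at most $N^c$ of the sets in $\mathcal{U}$ contain $x$, for a constant $c = c(N)$, so the nerve $P_0 \defeq \mathrm{Nerve}(\mathcal{U})$, realized as a uniform simplicial complex, has dimension at most $N^c - 1$. Using the standard partition of unity $\phi_i(x) \defeq \max\bigl(0,\, 1 - \dist(x,x_i)/r\bigr)\big/\sum_j \max\bigl(0,\, 1 - \dist(x,x_j)/r\bigr)$, the map $F_0 \colon X \to P_0$, $F_0(x) \defeq \sum_i \phi_i(x)\, e_i$, is $(C/r)$-Lipschitz and $(2r)$-cobounded with $C = C(N)$, since only $O(N^c)$ of the $\phi_i$ are nonzero near $x$ and each has Lipschitz constant $O(1/r)$. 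Choosing $D_0$ appropriately makes $F_0$ already $\eps$-Lipschitz and $(D/\eps)$-cobounded, but the dimension of $P_0$ is still far from $\floor{\log_2 N}$.

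The second stage is the dimension reduction. I would iteratively push $F_0$ through lower-dimensional skeleta: as long as the image meets a simplex $\sigma$ of dimension $d > \floor{\log_2 N}$, one chooses an interior point $p_\sigma \in \sigma$ not in the image on the boundary data and radially projects the relevant part of the image onto $\partial \sigma$, then extends consistently on adjacent simplices. The key combinatorial input that makes $\floor{\log_2 N}$ the correct target dimension is the doubling hypothesis: grouping the balls around a given point $x$ by the dyadic scale of $\dist(x,x_i) - \mr{dist}(x, T)$ and applying Lemma~\ref{lem:DMgrowth} inductively shows that the support of $F_0(x)$ decomposes into at most $\floor{\log_2 N}+1$ "dyadic shells", so after $O(N^c - \floor{\log_2 N})$ reduction rounds the map factors through the $\floor{\log_2 N}$-skeleton. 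Each projection step inflates the Lipschitz constant and the cobounded constant by a factor depending only on $N$, so readjusting $D_0$ at the start absorbs the total inflation into a single constant $D = D(N)$.

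The main obstacle is the dimension reduction itself: one must organize the projections so that (i) the homotopies on adjacent top-dimensional simplices agree on their common faces, (ii) the chosen puncture point $p_\sigma$ stays uniformly far from the image of $F_0$ on $\partial\sigma$ so that the radial retraction has Lipschitz constant bounded in terms of $N$ alone, and (iii) after the prescribed number of rounds the complex really has collapsed to dimension $\floor{\log_2 N}$, not merely to something smaller than the initial nerve dimension. Point (iii) is where the sharp constant $\log_2 N$ enters, via the dyadic-shell count above; points (i)--(ii) are handled by first making a small perturbation of $F_0$ so that its image is in general position with respect to a chosen family of interior points, chosen once and for all using the bounded local combinatorics of $P_0$. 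Once these three ingredients are assembled, one combines the dimension-reduced map with the bounds from the first stage to obtain the desired $\eps$-Lipschitz, $(D/\eps)$-cobounded map $F \colon X \to P$ with $\dim P \le \floor{\log_2 N}$.
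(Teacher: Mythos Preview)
Your overall architecture matches Gromov's: build a Lipschitz, cobounded map into the nerve of a ball cover, then reduce dimension by radially projecting each top simplex from an interior point missed by the image. The first stage is fine. The gap is in the dimension reduction, specifically in how you locate the puncture point $p_\sigma$ and why the process stops at $\floor{\log_2 N}$.

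Your ``dyadic shell'' claim---that the support of $F_0(x)$ splits into at most $\floor{\log_2 N}+1$ shells---is not a valid argument and does not yield the dimension bound. All the net points $x_i$ with $\phi_i(x) \neq 0$ lie in a single ball $B_r(x)$; by Lemma~\ref{lem:DMgrowth} there can be up to $N^c$ of them, not $\floor{\log_2 N}+1$, and no dyadic grouping of the quantities $\dist(x,x_i) - \dist(x,T)$ changes that count or forces the image into a low-dimensional skeleton. Likewise, a ``general position'' perturbation will not by itself keep $p_\sigma$ \emph{uniformly} far from the image: $X$ is just a metric space, and a priori $F_0(X) \cap \sigma$ could be $\delta$-dense in $\sigma$ for arbitrarily small $\delta$, in which case the radial retraction from any $p_\sigma$ has Lipschitz constant of order $1/\delta$, destroying control in terms of $N$.

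The mechanism you are missing is a packing/volume comparison. Let $d = \dim\sigma$ and suppose $F_0(X)\cap\sigma$ is $\eps_0$-dense in $\sigma$. An $(\eps_0,\eps_0)$-net $\{y_1,\dots,y_k\}$ in $F_0(X)\cap\sigma$ must then cover $\sigma$, so $k \ge c\,\eps_0^{-d}$. On the other hand, the preimages $F_0^{-1}(B_{\eps_0}(y_j))$ are disjoint, each contains a ball of radius comparable to $r\eps_0$ (by the Lipschitz bound on $F_0$), and all sit inside a fixed ball of radius $O(r)$ (by coboundedness). Lemma~\ref{lem:DMgrowth} then gives $k \le C\,\eps_0^{-\log_2 N}$. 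Comparing the two inequalities forces $\eps_0 \ge \eps_0(N) > 0$ whenever $d > \log_2 N$. That uniform lower bound is exactly what lets you choose $p_\sigma$ at distance $\ge \eps_0/2$ from the image and bound the Lipschitz constant of the retraction purely in terms of $N$; and it is the comparison of the exponents $d$ versus $\log_2 N$ that explains why the reduction halts precisely at $\floor{\log_2 N}$.
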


\begin{proof}
Fix $\eps>0$ and set $r=\eps^{-1}$. Take a $(2r/3,2r/3)$-net $T=\{x_i\}_{i=1}^{\infty}$. Then the family $\{B_{2r/3}(x_i)\}_{i=1}^{\infty}$ covers $X$, while the balls $\{B_{r/3}(x_i)\}_{i=1}^{\infty}$ are pairwise disjoint. Define
\[
\phi_i(x)\defeq \max\{0,1-3r^{-1}\dist(x,B_{2r/3}(x_i))\}.
\]
Each $\phi_i$ is a nonnegative $9r^{-1}$-Lipschitz function. Now define
\[
F_0:X\to \ell^2(\N),
\qquad
F_0(x)=\left(\frac{\phi_j(x)}{\sum_i \phi_i(x)}\right)_{j=1}^{\infty}.
\]
Its image lies in the unit simplex
\[
\Delta\defeq \Bigl\{(y_i)_{i=1}^{\infty}\in \ell^2(\N): \sum_i y_i=1\Bigr\}.
\]
By Lemma~\ref{lem:DMgrowth}, there exists $d=d(N)$ such that every $x\in X$ belongs to at most $d$ of the balls $B_{2r/3}(x_i)$.  By identifying $\N$ with the net $T$, $F_0$ can be viewed as a map into the nerve of the covering $\{B_{2r/3}(x_i)\}_{i=1}^\infty$, which is realized as a subcomplex $P^{(d)}$ of dimension at most $d$ in $\Delta$. A direct computation shows that $F_0$ is $Cr^{-1}$-Lipschitz for some $C=C(N)$.

We now perform Gromov's dimension-reduction step. Inductively define maps
\[
F_j:X\to P^{(d-j)},\qquad j\ge 0,
\]
with the following properties:
\begin{enumerate}
    \item\label{item:lip} there exists $C_j=C_j(N)$ such that $F_j$ is $C_jr^{-1}$-Lipschitz;
    \item\label{item:intersection} for every vertex $v$ of $P^{(d-j)}$, the pullback $F_j^{-1}(\st(v))$ is contained in a union of finitely many balls from $\{B_{2r/3}(x_i)\}$ intersecting a fixed ball of this family. In particular,
    \[
    \sup_{\sigma\subset P^{(d-j)}}\diam(F_j^{-1}(\sigma))\le 4r.
    \]
\end{enumerate}
These properties hold for $j=0$ by construction.

Assume $\dim P^{(d-j)}>\log_2 N$. For each top-dimensional simplex $\sigma\subset P^{(d-j)}$, we have the following claim:

\begin{claim}
    There exists a point in $\Int(\sigma)\setminus F_j(X)$ at uniformly positive distance from $F_j(X)\cap \sigma$.
\end{claim} 

\begin{proof}[Proof of claim]
    Let
\[
\delta_\sigma\defeq \sup_{y\in \sigma}\dist(y,F_j(X)\cap \sigma)\ge 0.
\]
Fix any positive number $\eps_0\ge \delta_\sigma$. Take a finite $(\eps_0,\eps_0)$-net $\{y_i\}_{i=1}^k$ in $F_j(X)\cap \sigma$. Then $\{B_{2\eps_0}(y_i)\}_{i=0}^k$ covers $\sigma$. Since $\sigma\in P^{(d-j)}$ carries the Euclidean metric, there exists $c\defeq c(N)>0$ such that 
\[
\vol(\sigma)\le k c \eps_0^{\dim\sigma}\Rightarrow\ k\ge c \eps_0^{-\dim\sigma}.
\] 
By item \eqref{item:lip}, every $F_j^{-1}(B_{\eps_0}(y_i))$ contains a ball of radius $C^{-1}r\eps_0$, and by construction, the sets in the family $\{F_j^{-1}(B_{\eps_0}(y_i))\}_{i=0}^k$ are pairwise disjoint. Thus, we have a $Cr\eps_0$-separated set of $k$ elements. Meanwhile, by item \eqref{item:intersection}, $\cup_{i=1}^k F_j^{-1}(B_{\eps_0}(y_i))$ are contained in the union of at most $d$ balls of radius $2r/3$ that intersects a fixed ball $B_{2r/3}$. Therefore, this $Cr\eps_0$-separated set is contained in the ball $B_{2r}$. By Lemma \ref{lem:DMgrowth}, we have 
\[
k\le N^2 \left(\frac{2r}{Cr\eps_0}\right)^{\log_2 N} =C\eps_0^{-\log_2 N}.
\]
Combining two bounds for $k$ gives
\[
c \eps_0^{-\dim \sigma}\le k\le C\eps_0^{-\log_2 N}.
\]
Since $\dim\sigma=\dim P^{(d-j)}>\log_2 N$, we get a uniformly positive lower bound of $\eps_0$, in turn, as $\eps_0$ is arbitrarily close to $\delta_\sigma$, we have
\[
\delta_\sigma\ge c(N)>0,
\]
independent of $\sigma$. 
%Choose a finite $(\delta_\sigma,\delta_\sigma)$-net in $F_j(X)\cap \sigma$. Volume comparison inside the Euclidean simplex $\sigma$ yields a lower bound of order $\delta_\sigma^{-\dim \sigma}$ on the size of such a net. On the other hand, by the Lipschitz property, disjoint preimages of the corresponding balls yield a separated subset of a metric ball in $X$, and Lemma~\ref{lem:DMgrowth} bounds its size by a multiple of $\delta_\sigma^{-\log_2 N}$. Since $\dim\sigma>\log_2 N$, this forces a positive lower bound
\end{proof}

Choose the point in the claim in each top-dimensional simplex $\sigma$ and project linearly from that point to $\partial\sigma$, the boundary of the simplex. These simplexwise projections glue to a Lipschitz map
\[
\pi_j:P^{(d-j)}\to P^{(d-j-1)}
\]
with Lipschitz constant depending only on $N$. Set
\[
F_{j+1}\defeq \pi_j\circ F_j.
\]
Then $F_{j+1}$ still satisfies the two properties above. Iterating this procedure until the target has dimension $\floor{\log_2 N}$ yields a map
\[
F:X\to P
\]
into a uniform simplicial complex $P$ of dimension $\floor{\log_2 N}$ such that $F$ is $Cr^{-1}$-Lipschitz and $4r$-cobounded for some $C=C(N)$. Since $r=\eps^{-1}$, this proves the theorem.
\end{proof}

%\begin{remark}
%Combining Theorem~\ref{thm:cobddLip} with Proposition~\ref{prop:asanequi} recovers the sharp doubling bound. The same simplicial construction may also be run in the volume-doubling setting, leading to Theorem~\ref{thm:VD}.
%\end{remark}

Finally, we observe that with additional volume noncollapsed assumption, one can further use Gromov's argument to study the case of equality for the asymptotic Assouad--Nagata dimension upper bound.

\begin{proposition}[Proposition~\ref{prop:Euc}]\label{prop:large}
Let $(M,g)$ be a Riemannian $n$-manifold with $\Ric_g\ge 0$ and volume noncollapsed, i.e.
\[
v\defeq \inf_{p\in M}\vol_g(B_1(p))>0.
\]
Then $\asandim(M)=n$ (or $\asdim(M)=n$) if and only if $M$ is large in the sense of Gromov, namely
\[
\sup_{x\in M}\vol_g(B_r(x))=\omega_n r^n
\qquad\text{for every }r\ge 0.
\]
\end{proposition}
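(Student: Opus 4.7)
The plan is to prove the two implications ``$M$ large $\Rightarrow \asdim(M)=\asandim(M)=n$'' and ``$M$ not large $\Rightarrow \asandim(M)\le n-1$'' separately, which together give both biconditionals in the statement, since $\asdim(M) \le \asandim(M) \le n$ follows from Theorem~\ref{thm:VD} applied with the Bishop--Gromov volume doubling constant $c_D = 2^n$.

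For the first implication, I would invoke the equivalence, established under $\Ric_g \ge 0$ by Cai~\cite{CaiLargemanifold} and Shen~\cite{ShenLarge}, between Gromov's volumetric largeness and the property that the macroscopic dimension of $M$ is at least $n$ at every scale; the latter says that no continuous map from $M$ into an $(n-1)$-dimensional polyhedron has uniformly bounded fibers. On the other hand, Proposition~\ref{prop:asanequi} tells us that $\asandim(M)\le n-1$ would produce, for every sufficiently small $\varepsilon$, an $\varepsilon$-Lipschitz, $D/\varepsilon$-cobounded map from $M$ into an $(n-1)$-dimensional uniform simplicial complex, whose fibers have diameter at most $D/\varepsilon$; fixing any such $\varepsilon$ gives a continuous bounded-fiber map and contradicts largeness. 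The same reasoning, applied to the corresponding Lipschitz characterization of $\asdim$, shows $\asdim(M) \ge n$, so together with the a priori upper bound we obtain $\asdim(M) = \asandim(M) = n$.

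The more delicate direction is ``$M$ not large $\Rightarrow \asandim(M)\le n-1$''. The failure of largeness provides $\delta, r_0 > 0$ with $\sup_{x\in M}\vol_g(B_{r_0}(x)) \le (1-\delta)\omega_n r_0^n$, and pointwise Bishop--Gromov monotonicity of $r\mapsto \vol_g(B_r(x))/(\omega_n r^n)$ upgrades this to
\[
\sup_{x\in M}\vol_g(B_r(x)) \;\le\; (1-\delta)\,\omega_n r^n \qquad \text{for all } r\ge r_0.
\]
I would then rerun the proof of Theorem~\ref{thm:cobddLip}, choosing the initial net at scale $r = \varepsilon^{-1} \ge r_0$ so that the improved volume bound is available wherever needed. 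The volumetric counting that replaces Lemma~\ref{lem:DMgrowth} in the Riemannian case now reads
\[
|B_R(x)\cap T| \;\le\; \frac{\vol_g(B_R(x))}{\inf_y \vol_g(B_{r/2}(y))} \;\le\; (1-\delta)\,\frac{2^n\omega_n}{v}\Big(\frac{R}{r}\Big)^n,
\]
and this extra $(1-\delta)$ factor is carried into the inductive dimension-reduction step. In Gromov's argument the reduction halts at $\dim P^d = n$ because the combinatorial packing lower bound $k\ge c_n\varepsilon_0^{-n}$, coming from a regular $n$-simplex, and the volumetric upper bound $k\le C_0\varepsilon_0^{-n}$ on disjoint small balls inside a larger ball are only comparable. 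With the sharpened upper bound $k\le (1-\delta)C_0\varepsilon_0^{-n}$, and after tuning the nerve-construction parameters so that $C_0$ is only marginally larger than $c_n$, we obtain a uniform positive lower bound on $\varepsilon_0$ even at $\dim P^d = n$. This enables one additional projection onto the $(n-1)$-skeleton and produces an $\varepsilon$-Lipschitz, $D/\varepsilon$-cobounded map $M \to P^{n-1}$ for every $\varepsilon \le 1/r_0$; Proposition~\ref{prop:asanequi}\eqref{item:asandim} then gives $\asandim(M)\le n-1$.

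The main obstacle is the constant tracking at $\dim P^d = n$: in the Euclidean comparison case the bounds $c_n\varepsilon_0^{-n}$ and $C_0\varepsilon_0^{-n}$ agree up to a bounded factor, which is precisely why the unaided proof of Theorem~\ref{thm:cobddLip} cannot push past dimension $n$. The hypothesis ``$M$ not large'' provides, through Bishop--Gromov rigidity, the strict factor $(1-\delta)$ needed to break through this critical stage, and verifying that the remaining combinatorial and geometric constants can be arranged so that $(1-\delta)$ actually reverses the inequality is the essential computation of the proof.
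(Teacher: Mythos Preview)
Your plan for the first implication matches the paper's, and the overall architecture of the second implication---rerun Gromov's dimension reduction and try to push one step further---is also correct.  The genuine gap is in how you exploit ``$M$ not large''.

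You extract only the weak consequence $\sup_x\vol_g(B_r(x))\le(1-\delta)\omega_n r^n$ for $r\ge r_0$.  At the critical step $\dim P=n$, the two sides of the packing inequality read
\[
c_n\,\eps_0^{-n}\ \le\ k\ \le\ C_0\,\eps_0^{-n},
\]
with the \emph{same} power of $\eps_0$ on both sides.  Replacing $C_0$ by $(1-\delta)C_0$ still yields only a comparison of constants, $c_n\le(1-\delta)C_0$, which either holds for every $\eps_0$ or for none; it never produces a lower bound on $\eps_0$.  Your suggestion of ``tuning the nerve-construction parameters so that $C_0$ is only marginally larger than $c_n$'' cannot be carried out: $C_0/c_n$ involves the Lipschitz constant of the partition-of-unity map $F$ (hence the covering multiplicity, i.e.\ the doubling constant), the noncollapsing constant $v$, and ratios of Euclidean simplex and ball volumes, and there is no mechanism to force these within a factor $(1-\delta)^{-1}$ of each other.

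What the paper uses instead is the much stronger statement (its Lemma~\ref{lem:largeeq}, proved via the Cheeger--Colding splitting theorem for Ricci limit spaces) that under $\Ric\ge 0$,
\[
M\text{ not large}\quad\Longleftrightarrow\quad \limsup_{r\to\infty}\frac{\sup_{p}\vol_g(B_r(p))}{r^n}=0.
\]
With this, the upper bound in the packing inequality is not $(1-\delta)C_0\eps_0^{-n}$ but rather $o_r(1)\cdot\eps_0^{-n}$ as the scale $r=\eps^{-1}\to\infty$.  Feeding this into the inequality and using the noncollapsing lower bound on small balls, the paper deduces $\eps^{-1}\eps_0\to\infty$, i.e.\ $\eps_0$ decays strictly slower than $\eps$.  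The radial projection onto $\partial\sigma$ then has Lipschitz constant $O(1/\eps_0)$, so the composite map into $P^{n-1}$ is $O(\eps/\eps_0)$-Lipschitz with $\eps/\eps_0\to 0$, which is exactly what is needed in Proposition~\ref{prop:asanequi}.  The essential missing idea in your proposal is this appeal to Cheeger--Colding theory to upgrade ``volume ratio $\le 1-\delta$'' to ``volume ratio $\to 0$''.
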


We start with a lemma that characterizes the largeness of a Riemannian manifold in terms of its asymptotic volume growth.

\begin{lemma}\label{lem:largeeq}
Let $(M,g)$ be a complete Riemannian manifold with $\Ric_g\ge 0$. Then the following are equivalent:
\begin{enumerate}
    \item\label{item:large} $M$ is large, i.e.
    \[
    \sup_{p\in M}\vol_g(B_r(p))=\omega_n r^n
    \qquad\text{for every }r\ge 0;
    \]
    \item\label{item:growth} The volume growth of balls in $M$ satisfies
    \[
    \limsup_{r\to \infty} \frac{\sup_{p\in M}\vol_g(B_r(p))}{r^n}>0.
    \]
\end{enumerate}
\end{lemma}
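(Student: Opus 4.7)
The implication $\eqref{item:large}\Rightarrow\eqref{item:growth}$ is immediate: if $\sup_p \vol_g(B_r(p))=\omega_n r^n$ for every $r$, the ratio $\sup_p \vol_g(B_r(p))/r^n$ is identically $\omega_n>0$. For the converse, I set $f(r)\defeq \sup_{p\in M}\vol_g(B_r(p))/(\omega_n r^n)$. Pointwise Bishop--Gromov comparison shows $r\mapsto \vol_g(B_r(p))/(\omega_n r^n)$ is non-increasing, bounded above by $1$, and tends to $1$ as $r\to 0^+$, and these properties pass to the supremum $f$. Therefore $\eqref{item:large}$ is equivalent to $\lim_{r\to\infty}f(r)=1$, whereas $\eqref{item:growth}$ amounts to $\lim_{r\to\infty}f(r)>0$, and the real content of the converse is the dichotomy $\lim_{r\to\infty}f(r)\in\{0,1\}$.

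Assume $c\defeq\lim_{r\to\infty}f(r)>0$. The plan is a blow-down. Pick $r_k\to\infty$ and $p_k\in M$ with $\vol_g(B_{r_k}(p_k))/(\omega_n r_k^n)\to c$, and rescale by $g_k\defeq r_k^{-2}g$; then $(M,g_k,p_k)$ still satisfies $\Ric_{g_k}\ge 0$, and $\vol_{g_k}(B^{g_k}_1(p_k))\to c\omega_n>0$. Gromov's precompactness theorem extracts a subsequential pointed Gromov--Hausdorff limit $(X,d_\infty,p_\infty)$, and Colding's continuity of volume identifies $(X,\mathcal{H}^n)$ as a noncollapsed $n$-dimensional Ricci limit space with $\mathcal{H}^n(B_1(p_\infty))=c\omega_n$. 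The same volume continuity, applied to any $y\in X$ and any $y_k\in M$ converging to $y$ in pointed GH, gives
\[
\frac{\mathcal{H}^n(B_s(y))}{\omega_n s^n}=\lim_{k\to\infty}\frac{\vol_g(B_{sr_k}(y_k))}{\omega_n(sr_k)^n}\le \lim_{k\to\infty}f(sr_k)=c
\]
for every $s>0$.

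To conclude, I would invoke the Cheeger--Colding regularity theorem: $\mathcal{H}^n$-almost every $y\in X$ is a regular point whose tangent cone is $\R^n$, so that $\lim_{s\to 0^+}\mathcal{H}^n(B_s(y))/(\omega_n s^n)=1$. Choosing such a $y$ and passing $s\to 0^+$ in the displayed inequality forces $c\ge 1$, and combined with the elementary upper bound $c\le 1$ coming from Bishop--Gromov on $M$ itself, we conclude $c=1$, i.e., $\eqref{item:large}$.

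The hardest part will be the invocation of the Cheeger--Colding machinery for noncollapsed Ricci limit spaces: Gromov precompactness, continuity of volume under pointed GH limits with a uniform Ricci lower bound, and $\mathcal{H}^n$-almost everywhere existence of regular points with Euclidean tangent cones. These are all standard but nonelementary. I do not see how to avoid them, since the dichotomy $\lim f\in\{0,1\}$ genuinely seems to require passing to a blow-down at infinity and exploiting the Euclidean structure of generic tangent cones there; purely pointwise Bishop--Gromov on $M$ only yields the weaker bound $f(r)\ge c$.
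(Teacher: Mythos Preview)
Your proof is correct and takes a genuinely different route from the paper. Both arguments pass to a noncollapsed Ricci limit space and invoke Cheeger--Colding theory, but they diverge from there. The paper does \emph{not} rescale: it takes a pointed limit $(M_\infty,p_\infty)$ of $(M,p_i)$ at the original scale, obtains the lower bound $\mathcal{H}^n(B_r(p_\infty))\ge c r^n$ for all $r$, and then runs Shen's argument (iterated splitting for Ricci limit spaces) to produce points $q_i\in M_\infty$ with $(M_\infty,q_i)\to(\R^n,0)$; a diagonal argument then yields $q_i'\in M$ with $(M,q_i')\to(\R^n,0)$, and volume convergence finishes. Your approach instead first observes that $f(r)=\sup_p\vol_g(B_r(p))/(\omega_n r^n)$ is non-increasing (so the whole lemma reduces to the dichotomy $\lim f\in\{0,1\}$), then blows down and uses the \emph{upper} bound $\mathcal{H}^n(B_s(y))\le c\,\omega_n s^n$ on the limit together with $\mathcal{H}^n$-a.e.\ regularity to force $c\ge 1$. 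Your argument is slightly more streamlined---the monotonicity reduction is clean, and you avoid the splitting/Shen step---while the paper's argument is more constructive, actually exhibiting a sequence of basepoints in $M$ where the geometry becomes Euclidean. Both rest on comparably deep inputs (volume continuity, and either a.e.\ regularity or the splitting theorem for limit spaces), so neither is more elementary.
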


\begin{proof}
The implication \eqref{item:large}$\Rightarrow$\eqref{item:growth} is immediate. Assume \eqref{item:growth}. Then for some $c>0$ there exist $r_i\to\infty$ and $p_i\in M$ such that
\[
\vol_g(B_{r_i}(p_i))\ge c r_i^n.
\]
By Bishop--Gromov, this implies
\[
\vol_g(B_r(p_i))\ge c r^n
\qquad\text{for all }0\le r\le r_i.
\]
Thus $(M,p_i,\vol_g)$ is a volume-noncollapsing sequence, and after passing to a subsequence it converges in the pointed Gromov--Hausdorff sense to a Ricci limit space $(M_\infty,p_\infty,\haus^n)$. Volume convergence \cites{Cheeger-Colding97I} implies
\[
\haus^n(B_r(p_\infty))\ge c r^n
\qquad\text{for every }r\ge 0.
\]
The argument used in \cite{ShenLarge}*{Theorem~1.3}, together with the splitting theorem for Ricci limit spaces \cite{Cheeger-Colding97I}, yields a sequence of points $q_i\in M_\infty$ such that $(M_\infty,q_i)$ converges to $(\R^n,0)$. Pulling these points back and applying a diagonal argument, one obtains points $q_i'\in M$ with
\[
(M,q_i')\to (\R^n,0).
\]
A second application of volume convergence shows that for every fixed $r\ge 0$,
\[
\lim_{i\to\infty}\vol_g(B_r(q_i'))=\omega_n r^n.
\]
Hence $M$ is large.
\end{proof}

\begin{proof}[Proof of Proposition~\ref{prop:large}]
Assume first that $M$ is large. Then, by \cite[Theorem~1]{CaiLargemanifold} together with the discussion below \cite[Theorem~1.3]{ShenLarge}, there is no continuous cobounded map from $M$ to an $(n-1)$-dimensional simplicial complex. In particular,
\[
\asandim(M)\ge \dim_{\mathrm{ms}}(M)=n.
\]
On the other hand, $\Ric_g\ge 0$ implies that $(M,g)$ is volume doubling with doubling constant at most $2^n$. Hence Theorem~\ref{thm:VD} gives
\[
\asandim(M)\le \andim(M)\le n.
\]
Therefore $\asandim(M)=n$.

Conversely, suppose that $\asandim(M)=n$ and that $M$ is not large. Let
\[
V(r)\defeq \sup_{x\in M}\vol_g(B_r(x)).
\]
By Lemma~\ref{lem:largeeq} the function $\frac{V(r)}{r^n}$ tends to $0$ as $r\to\infty$.

Fix $\eps>0$ and set $r:=\eps^{-1}$. Applying the simplicial construction from the proof of Theorem~\ref{thm:cobddLip} in the volume-doubling setting, we obtain an $\eps$-Lipschitz map
\[
F_\eps:M\to P^n
\]
to an $n$-dimensional uniform simplicial complex with the following additional property: there exists a constant $C_n>0$ such that for every vertex $v$ of $P^n$, the pullback
$F_\eps^{-1}(\st(v))$
is contained in a union of at most $C_n$ balls of radius $C_n r$ intersecting a fixed ball of the same radius. In particular, the preimage of every simplex has diameter at most $C_n r$.

For each top-dimensional simplex $\sigma\subset P^n$, define
\[
\delta_\sigma(\eps)\defeq \sup_{y\in \sigma} \dist\bigl(y,F_\eps(M)\cap \sigma\bigr)>0,
\qquad
\delta(\eps)\defeq \inf_\sigma \delta_\sigma(\eps)>0.
\]
We claim that
\[
\frac{\delta(\eps)}{\eps}\longrightarrow \infty
\qquad\text{as }\eps\to 0.
\]

Assume not. Then there exist a constant $C>0$ and a sequence $\eps_j\to 0$ such that
\[
\delta(\eps_j)\le C\eps_j.
\]
For each $j$, choose a top-dimensional simplex $\sigma_j$ such that
\[
\delta_{\sigma_j}(\eps_j)\le 2\delta(\eps_j).
\]
Let
\[
\{y_1,\dots,y_{k_j}\}\subset F_{\eps_j}(M)\cap \sigma_j
\]
be a maximal $4\delta(\eps_j)$-separated set. Since every point of $\sigma_j$ lies within distance $2\delta(\eps_j)$ of $F_{\eps_j}(M)\cap \sigma_j$, the $10\delta(\eps_j)$-balls centered at the $y_i$ cover $\sigma_j$. Hence
\[
k_j\ge c_n\,\delta(\eps_j)^{-n}
\]
for a constant $c_n>0$ depending only on $n$.

Choose $x_i\in M$ such that $F_{\eps_j}(x_i)=y_i$. Since $F_{\eps_j}$ is $\eps_j$-Lipschitz and the $y_i$ are $4\delta(\eps_j)$-separated, the balls
\[
B_{\delta(\eps_j)/(4\eps_j)}(x_i)
\]
are pairwise disjoint. On the other hand, by the star-control property above, all these balls are contained in a ball of radius $C_n\eps_j^{-1}$. It follows that 
\[
k_j \min_{1\le i\le k_j} \vol_g\left(B_{\delta(\eps_j)/(4\eps_j)}(x_i)\right)\le \sum_{i=1}^{k_j}\vol_g\left(B_{\delta(\eps_j)/(4\eps_j)}(x_i)\right)\le V(C_n\eps_j^{-1}).
\]
Combing two bounds for $k_j$ gives that 
\[
 \frac{\inf_{z\in M} \vol_g\left(B_{\delta(\eps_j)/(4\eps_j)}(z)\right)}{ c_n\,\delta(\eps_j)^{n}/(4\eps_j)^n}\cdot\frac{1}{(4\eps_j)^n}=\frac{\inf_{z\in M} \vol_g\left(B_{\delta(\eps_j)/(4\eps_j)}(z)\right)}{ c_n\,\delta(\eps_j)^{n}}\le  V(C_n\eps_j^{-1}) 
\]
We derive a contradiction in two cases. Let $z_j\in M$ be a point that almost realizes the infimum of the volume, that is 
$$\vol_g\left(B_{\delta(\eps_j)/(4\eps_j)}(z_j)\right)\le 2\inf_{z\in M} \vol_g\left(B_{\delta(\eps_j)/(4\eps_j)}(z)\right).$$ 

If $\delta(\eps_j)/(4\eps_j)\le 1$, then using volume noncollapse and Bishop--Gromov, we obtain
\[
\frac{v}{2c_n}\le \vol_g(B_{1}(z_j))/2c_n\le \frac{\vol_g\left(B_{\delta(\eps_j)/(4\eps_j)}(z_j)\right)}{ 2c_n\,\delta(\eps_j)^{n}/(4\eps_j)^n}\le \frac{V(C_n\eps_j^{-1})}{(4\eps_j)^{-n}}\to 0
\]
as $j\to\infty$, a contradiction. 

If $\delta(\eps_j)/(4\eps_j)\ge 1$, then $4\eps_j\le \delta(\eps_j)\le C\eps_j$. Passing to a possible subsequence as $j\to\infty$, we assume that $ \delta(\eps_j)/(4\eps_j)\to s\in  [1,C]$, and that $z_j$ either converges to a point $z\in M$ or a point $z\in M_\infty$, the Ricci limit space of the sequence $(M,z_j)$. In either case we have by volume convergence that 
\[
0=\limsup_{r\to\infty }\frac {V(r)}{r^n}\ge \liminf_{j\to \infty}\frac{\vol_g\left(B_{\delta(\eps_j)/(4\eps_j)}(z_j)\right)}{ 2c_n\,\delta(\eps_j)^{n}/(4\eps_j)^n}=2c_n s^{-n}\haus^n(B_s(z))>0,
\]
a contradiction.

Therefore, we have proved the claim that
\[
\frac{\delta(\eps)}{\eps}\to\infty.
\]

Now, for each top-dimensional simplex $\sigma\subset P^n$, choose a point
\[
a_\sigma\in \Int(\sigma)\setminus F_\eps(M)
\]
such that
\[
d\bigl(a_\sigma,F_\eps(M)\cap \sigma\bigr)\ge \delta(\eps)/2.
\]
Let
\[
\pi_\sigma:\sigma\to \partial \sigma
\]
be the radial projection from $a_\sigma$. Since $P^n$ is uniform, there exists a constant $C_n'>0$ such that each $\pi_\sigma$ is $C_n'/\delta(\eps)$-Lipschitz. Gluing these maps simplexwise yields a map
\[
\pi:P^n\to P^{n-1}
\]
with the same Lipschitz bound. Moreover, for every vertex $v$ of $P^n$ we have
\[
\pi^{-1}(\st(v))\subseteq \st(v),
\]
so the star-control property for $F_\eps$ persists for $\pi\circ F_\eps$. Consequently,
\[
\pi\circ F_\eps:M\to P^{n-1}
\]
is $C_n'\eps/\delta(\eps)$-Lipschitz and $C_n''/\eps$-cobounded for some constant $C_n''$ independent of $\eps$.

Since $\eps/\delta(\eps)\to 0$, Proposition~\ref{prop:asanequi} implies that
\[
\asandim(M)\le n-1,
\]
contradicting the assumption $\asandim(M)=n$. Hence $M$ must be large.
\end{proof}

\begin{remark}
Under the same assumptions, the proof above also shows that $\asdim(M)=n$ if and only if $\asandim(M)=n$.
\end{remark}

\begin{corollary}[Corollary~\ref{cor:psc}]\label{cor:scalar}
Let $(M,g)$ be an $n$-dimensional complete noncompact manifold with $\Ric_g\ge 0$, $\Sc_g\ge 2$, and
\[
v\defeq \inf_{x\in M}\vol_g(B_1(x))>0.
\]
Then
\[
\asandim(M)\le n-1.
\]
\end{corollary}

\begin{proof}
Under our assumptions, it is proved in \cite[Theorem~1.1]{wxzzScalar} that $(M,g)$ is not large. The result therefore follows from Proposition~\ref{prop:large}.
\end{proof}

\bibliographystyle{alphaurl}
\bibliography{ref}
\end{document}